\documentclass[12pt, reqno]{amsart}
\UseRawInputEncoding
\usepackage{amsmath, amsthm, amscd, amsfonts, amssymb, graphicx, color}
\usepackage[bookmarksnumbered, colorlinks, plainpages]{hyperref}
\input{mathrsfs.sty}

\hypersetup{colorlinks=true,linkcolor=red, anchorcolor=green,
citecolor=cyan, urlcolor=red, filecolor=magenta, pdftoolbar=true}

\textheight 22.5truecm \textwidth 14.5truecm
\setlength{\oddsidemargin}{0.35in}\setlength{\evensidemargin}{0.35in}

\setlength{\topmargin}{-.5cm}

\newtheorem{theorem}{Theorem}[section]
\newtheorem{lemma}[theorem]{Lemma}
\newtheorem{proposition}[theorem]{Proposition}
\newtheorem{corollary}[theorem]{Corollary}
\theoremstyle{definition}

\theoremstyle{remark}
\newtheorem{remark}[theorem]{Remark}
\numberwithin{equation}{section}

\newcommand{\fii}{\varphi}

\usepackage{color}
\begin{document}

\setcounter{page}{1}

\title[From norm derivatives to orthogonalities]
{From norm derivatives to orthogonalities in Hilbert $C^*$-modules}
\author[P. W\'ojcik \MakeLowercase{and} A. Zamani]
{Pawe\l{} W\'ojcik$^1$ \MakeLowercase{and} Ali Zamani$^{2,*}$}

\address{$^1$Institute of Mathematics, Pedagogical University of Cracow, Podchor\c a\.zych~2, 30-084 Krak\'ow, Poland}
\email{pawel.wojcik@up.krakow.pl}
\address{$^*$ Corresponding author, $^2$School of Mathematics and Computer Sciences, Damghan University, Damghan, P.~O.~BOX 36715-364, Iran}
\email{zamani.ali85@yahoo.com}
\subjclass[2010]{46L08; 46C50; 46B20; 46L05.}
\keywords{Hilbert $C^*$-module; $C^*$-algebra; state; norm derivative; Birkhoff--James orthogonality.}
\begin{abstract}
Let $\big(\mathscr{X}, \langle\cdot, \cdot\rangle\big)$ be a Hilbert $C^*$-module over a $C^*$-algebra $\mathscr{A}$
and let $\mathcal{S}(\mathscr{A})$ be the set of states on $\mathscr{A}$.
In this paper, we first compute the norm derivative for elements
$x$ and $y$ of $\mathscr{X}$ as follows
\begin{align*}
\rho_{_{+}}(x, y) = \max\Big\{\mbox{Re}\,\varphi(\langle x, y\rangle): \, \varphi \in \mathcal{S}(\mathscr{A}),
\varphi(\langle x, x\rangle) = \|x\|^2\Big\}.
\end{align*}
We then apply it to characterize different concepts of orthogonality in $\mathscr{X}$.
In particular, we present a simpler proof of the classical
characterization of Birkhoff--James orthogonality in Hilbert $C^*$-modules.
Moreover, some generalized Daugavet equation in the $C^*$-algebra $\mathbb{B}(\mathcal{H})$ of all bounded
linear operators acting on a Hilbert space $\mathcal{H}$ is solved.
\end{abstract} \maketitle
\section{Introduction and Preliminaries}
Let $\big(X, \|\!\cdot\!\|\big)$ be a normed space and $X^*$ its dual topologic space.
We define two mappings $\rho_{_{+}}, \rho_{_{-}}:X\times X\rightarrow\mathbb{R}$ by the formulas
\begin{align*}
\rho_{_{\pm}}(x,y):=\lim_{t\rightarrow0^{\pm}}\frac{\|x+ty\|^2-\|x\|^2}{2t}=\|x\|\!\cdot\!\lim_{t\rightarrow0^{\pm}}\frac{\|x+ty\|-\|x\|}{t}.
\end{align*}
The convexity of the norm yields that the above definitions are meaningful. These mappings
are called the \textit{norm derivatives} and their following useful
properties can be found, e.g. in \cite{A.S.T, Dra}. For every $x$ and $y$ in $X$ and
for every $\alpha = |\alpha|e^{i\theta}$, $\beta = |\beta|e^{i\omega}$ in $\mathbb{C}$, we have
\begin{itemize}
\item[(P1)] $\rho_{_{-}}(x, y)\leq \rho_{_{+}}(x, y)$,
$|\rho_{_{\pm}}(x,y)| \leq \|x\|\!\cdot\!\|y\|$ and $\rho_{_{\pm}}(x,x) = \|x\|^2$,
\item[(P2)]  $\rho_{_{\pm}}(-x,y) = \rho_{_{\pm}}(x,-y)=-\rho_{\mp}(x,y)$,
\item[(P3)]   $\rho_{_{\pm}}(x,\alpha x + y) = \mbox{Re}\,\alpha \,\|x\|^2 + \rho_{_{\pm}}(x,y)$,
\item[(P4)] $\rho_{_{\pm}}(\alpha x,\beta y) = |\alpha \beta|\rho_{_{\pm}}(x,e^{i(\omega - \theta)}y)$,
\item[(P5)] $\rho_{_+}(x,y)=\lim\limits_{t\rightarrow 0^+}\rho_{_+}(x+ty,y)$.
\end{itemize}
If the norm on $X$ comes from an inner product $[\cdot, \cdot]$, then we obtain
$\rho_{_{+}}(x,y) = \rho_{_{-}}(x,y) = \mbox{Re}\,[x, y]$ for all $x, y\in X$,
i.e. both $\rho_{_{+}}$ and $\rho_{_{-}}$ are perfect generalizations of inner products.

In more general cases mappings $\rho_{_{+}}$ and $\rho_{_{-}}$ are useful for applications in approximation
theory and, in particular, they played a significant role in the paper \cite{Wo.2019}.
For more information about the norm derivatives and their
properties the reader is referred to \cite{A.S.T, Dra} (see also \cite{C.W.1, C.W.2, M.Z.D, Z.M}).

From \cite{Dra}, for two elements $x$ and $y$ of a normed linear space $X$, we have
\begin{align}\label{rho-plus-max-property}
\rho_{_{+}}(x, y) = \|x\|\max\big\{\mbox{Re}\,x^*(y): \, x^*\in J(x)\big\},
\end{align}
where $J(x): = \big\{x^*\in X^*:\,\|x^*\| = 1,\, x^*(x) = \|x\|\big\}$.
If we have additional structures on a normed linear space $X$,
then we obtain other expressions for the norm derivative $\rho_{_{+}}$.
Therefore we survey on the well known results involving norm derivatives.
So the present section has an expository character in part;
however, many of the surveyed results will be essentially extended in the next section.

In the classical Banach space $\mathcal{C}(K)$ of all continuous functions on a
compact Hausdroff space $K$, the result for $f, g\in \mathcal{C}(K)$ were given by Kecki\'{c} \cite{Ke.3}:
\begin{align*}
\rho_{_{+}}(f, g) = \|f\|\max \Big\{\mbox{Re}\,\big(e^{-i\arg (f(x))}g(x)\big): \, x\in M_f\Big\}
\end{align*}
where $M_f:= \{x\in K: \, |f(x)| = \|f\|\}$.

In $\mathbb{B}(\mathcal{H})$, the $C^*$-algebra of all
bounded linear operators on a Hilbert space $\big(\mathcal{H}, [\cdot, \cdot]\big)$, and for $T, S\in \mathbb{B}(\mathcal{H})$,
Kecki\'{c} \cite{Ke.2} obtained the following formula:
\begin{align*}
\rho_{_{+}}(T, S) = \inf_{\varepsilon>0}\sup\Big\{\mbox{Re}[Sx, Tx]: \,x\in {\mathcal{H}}_{\varepsilon}, \|x\| =1\Big\},
\end{align*}
where ${\mathcal{H}}_{\varepsilon} := E_{T^*T}\big((\|T\| - \varepsilon)^2, \|T\|^2\big)$,
and $E_{T^*T}$ stands for the spectral measure of the operator $T^*T$.

Norm derivatives of the space $\mathbb{K}(\mathcal{H})$ (compact operators on $\mathcal{H}$)
has been studied in \cite{Ke.1}.
More precisely, for $T, S \in \mathbb{K}(\mathcal{H})$ where $T = U|T|$ is the polar decomposition of $T$, we have
\begin{align*}
\rho_{_{+}}(T, S) = \|T\|\max\Big\{\mbox{Re}[U^*Sx, x];\,\, x\in \Phi, \, \|x\| = 1\Big\},
\end{align*}
where $\Phi$ is the characteristic subspace of $T$ with respect to its eigenvalue $s_1$.

W\'{o}jcik \cite{Wo.2}, by using a different method, extended this result for
compact operators between real normed spaces, i.e. $\mathbb{K}(X,Y)$.
Moreover, similar investigations have been carried out by W\'{o}jcik \cite{Wo.2017} in $M$-ideals in bounded
operator space $\mathbb{B}(X,Y)$. The main result of \cite{Wo.2017}
says that if $T, S\in \mathbb{B}(X,Y)$ and ${\rm dist}\big(T,\mathbb{K}(X,Y)\big)<\|T\|$, then
\begin{align*}
\rho_{_{+}}(T, S) = \|T\|\max\Big\{\rho_{_{+}}(Tx,Sx):\,\, x\in {\rm Ext}B_X, \, \|Tx\| = \|T\|\Big\},
\end{align*}
where ${\rm Ext}B_X$ denotes the set of all extremal points of the closed unit ball $B_X$.

Motivated by the above properties, we compute the norm derivatives in Hilbert $C^*$-modules.
Namely, for two elements $x$ and $y$ of a Hilbert $\mathscr{A}$-module $\big(\mathscr{X}, \langle\cdot, \cdot\rangle\big)$
we will prove that
\begin{align*}
\rho_{_{+}}(x, y) = \max\Big\{\mbox{Re}\,\varphi(\langle x, y\rangle):
\, \varphi \in \mathcal{S}(\mathscr{A}),\, \varphi(\langle x, x\rangle) = \|x\|^2\Big\},
\end{align*}
where $\mathcal{S}(\mathscr{A})$ is the set of states on $\mathscr{A}$.
This formula enables us to characterize different concepts of Birkhoff–-James orthogonality
for elements of a Hilbert $C^*$-module. Some other related results are also discussed.

Before stating the results, we establish the notation and recall some definitions from the literature.
An element $a$ in a \textit{$C^*$-algebra} $\mathscr{A}$ is called \textit{positive} (we write $a\geq0$)
if $a = b^*b$ for some $b\in \mathscr{A}$. A linear functional $\varphi$ of $\mathscr{A}$ is \textit{positive}
if $\varphi(a)\geq0$ for every positive element
$a\in \mathscr{A}$. A \textit{state} is a positive linear functional whose norm is equal to one.
The symbol $\mathcal{S}(\mathscr{A})$ denotes the set of states on $\mathscr{A}$.

An \textit{inner product module} over $\mathscr{A}$ is a (left)
$\mathscr{A}$-module $\mathscr{X}$ equipped with an \textit{$\mathscr{A}$-valued
inner product} $\langle\cdot, \cdot\rangle$, which is $\mathbb{C}$-linear and
$\mathscr{A}$-linear in the first variable and has the properties $\langle x, y
\rangle^*=\langle y, x\rangle$ as well as $\langle x, x\rangle \geq 0$ with equality
if and only if $x = 0$.
An inner product $\mathscr{A}$-module $\mathscr{X}$ is called a \textit{Hilbert $\mathscr{A}$-module}
if it is complete with respect to the norm $\|x\| = {\|\langle x, x\rangle\|}^{\frac{1}{2}}$.
For $x\in\mathscr{X}$, by \cite[Theorem~3.3.6]{Mu}, there always
exists a $\varphi \in \mathcal{S}(\mathscr{A})$ such that $\varphi\big(\langle x, x\rangle\big) = \|x\|^2$.
So, let $\Omega_x$ denote the (nonempty) subset of the set of supporting functionals:
\begin{align*}
\Omega_x:=\big\{\varphi \in \mathcal{S}(\mathscr{A}): \varphi\big(\langle x, x\rangle\big) = \|x\|^2\big\}\subseteq J(x).
\end{align*}
Given a positive functional $\varphi$ on $\mathscr{A}$, we have the following useful version of the Cauchy--Schwarz inequality:
\begin{align}\label{inequality-positive-functional}
|\varphi(\langle x, y\rangle)|^2\leq \varphi(\langle x, x\rangle)\varphi(\langle y, y\rangle) \quad (x, y\in \mathscr{X}).
\end{align}
Every $C^*$-algebra $\mathscr{A}$ can be regarded as a Hilbert $C^*$-module over itself where the inner product is
defined by $\langle a, b\rangle := a^*b$.
By $\mathbb{M}_{n}(\mathbb{C})$ we denote the $C^*$-algebra of all
complex $n\times n$ matrices. We shall identify $\mathbb{B}(\mathbb{C}^n)$
and $\mathbb{M}_{n}(\mathbb{C})$ in the usual way.
We refer the reader to \cite{Dix, M.T} for more information on Hilbert $C^*$-modules.

A concept of orthogonality in a Hilbert $\mathscr{A}$-module $\mathscr{X}$ can be defined with respect to the
$\mathscr{A}$-valued inner product in a natural way, that is, two elements $x$ and $y$ of $\mathscr{X}$
are orthogonal, in short $x \perp y$, if $\langle x, y\rangle = 0$.
There are many different ways how one can extend this notion, see \cite{A.R.2} and the references therein.
One of them is the Birkhoff–-James orthogonality:
we say that $x$ and $y$ are Birkhoff–-James orthogonal, and we write $x\perp_B y$, if
$\|x\| \leq \|x + \lambda y\|$ for all $\lambda\in\mathbb{C}$.
A well known characterization of the Birkhoff--James orthogonality is
due to Aramba\v{s}i\'{c} and Raji\'{c} (see \cite{A.R.1}):
\begin{align}\label{a-r-b-g--theo-bj-orth}
x\perp_B y \quad\Leftrightarrow\quad \big(\exists \varphi\in\Omega_x : \varphi(\langle x,y\rangle)=0\big).
\end{align}
It has been proved also by Bhattacharyya and Grover (cf. \cite{B.G}).
The statement of this nice characterization is so simple, and its existing proofs so extremely
long, that one is easily seduced into an effort to find a simpler, shorter proof. The
present work is the result of our attempt.
It is worth mentioning that those four mathematicians applied a faithful
representation $\pi\colon \mathscr{A}\to \mathbb{B}(\mathcal{H})$
(see \cite[Theorem 2.6.1]{Dix}) and linking algebra of $\mathscr{X}$ (see \cite{M.T}).
But we do not use this strong tool. We apply norm derivatives.
In this paper we would like to present a simpler proof of this nice result and we
demonstrate the power of the norm derivatives. We hope that it sheds new light on this
intricate geometric structure of Hilbert $C^*$-modules and will provide the great applications of the
mappings $\rho_{_{\pm}}$ in the future.
\section{Main results: Norm derivatives in Hilbert $C^*$-modules}
In this section, we first compute the norm derivatives in Hilbert $C^*$-modules.
Then, as an application of our results, we get an explicit formula for
the norm derivatives $\rho_{_{\pm}}$ of certain elements in Hilbert $C^*$-modules.
Moreover, we apply our results to give some solutions of the
generalized Daugavet equation in the operator space $\mathbb{B}(\mathcal{H})$.

We start our work with the following lemma.
\begin{lemma}{\rm \cite{Wo.2012}}\label{lem-star-shaped-subset}
Suppose that $X$ is a real normed space.
Let $D\subseteq X$ be a dense and star-shaped subset (i.e. $\alpha D\subseteq D$ for all $\alpha>0$). Let
$M$ be a closed affine hyperplane (i.e. {\rm codim}M=1) such that $0\notin M$. Then ${\rm cl}(M\cap D)=M$.
\end{lemma}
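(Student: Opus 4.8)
The plan is to exploit the conical structure that the star-shaped hypothesis forces on $D$: since $\alpha D\subseteq D$ for every $\alpha>0$, the set $D$ is a union of open rays emanating from the origin, so once we locate any point of $D$ near our target we may slide it along its ray until it meets $M$. First I would represent the hyperplane analytically. Because $M$ has codimension one and is closed with $0\notin M$, there is a nonzero continuous linear functional $f\in X^*$ and a scalar $c\neq0$ with $M=\{x\in X:\,f(x)=c\}$; the condition $0\notin M$ is exactly $f(0)=0\neq c$, and closedness of $M$ is what guarantees the continuity of $f$. Since $M$ is closed we immediately obtain ${\rm cl}(M\cap D)\subseteq M$, so the entire content of the lemma is the reverse inclusion $M\subseteq{\rm cl}(M\cap D)$.

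To establish this, fix $m\in M$ and $\varepsilon>0$; I must produce a point of $M\cap D$ within $\varepsilon$ of $m$. By density of $D$, choose $d\in D$ as close to $m$ as desired. Because $f$ is continuous and $f(m)=c\neq0$, for $d$ sufficiently near $m$ the value $f(d)$ is close to $c$, hence $f(d)\neq0$ and in fact $f(d)$ has the same sign as $c$. Set $\alpha:=c/f(d)$; then $\alpha>0$, so the star-shaped hypothesis yields $\alpha d\in D$, while $f(\alpha d)=\alpha f(d)=c$ places $\alpha d\in M$. Thus $\alpha d\in M\cap D$, and we have found a candidate approximant.

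It remains only to verify that $\alpha d$ is genuinely close to $m$, and this is the single point requiring care. As $d\to m$ we have $f(d)\to c$, hence $\alpha=c/f(d)\to1$; combining $\alpha\to1$ with $d\to m$ through the estimate $\|\alpha d-m\|\leq|\alpha|\,\|d-m\|+|\alpha-1|\,\|m\|$ shows $\alpha d\to m$. Quantitatively, one uses $|f(d)-c|\leq\|f\|\,\|d-m\|$ to control $|\alpha-1|=|c-f(d)|/|f(d)|$, noting that $|f(d)|$ stays bounded away from $0$ precisely because $c\neq0$; then $\delta>0$ can be chosen so small that $\|d-m\|<\delta$ forces the right-hand side below $\varepsilon$, the quantity $\|m\|$ being a fixed constant. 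The main obstacle is therefore purely one of bookkeeping—tracking the continuous dependence of the radial rescaling factor $\alpha$ on $d$ and keeping $f(d)$ bounded away from zero—rather than any conceptual difficulty. This gives $m\in{\rm cl}(M\cap D)$, and since $m\in M$ was arbitrary, the proof is complete.
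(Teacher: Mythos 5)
Your proof is correct and complete: the representation $M=\{x: f(x)=c\}$ with $f$ continuous (continuity being equivalent to the closedness of $M$) and $c\neq 0$, the radial rescaling $d\mapsto (c/f(d))\,d$ of a nearby point of $D$ onto $M$, and the estimate $\|\alpha d-m\|\leq|\alpha|\,\|d-m\|+|\alpha-1|\,\|m\|$ with $|f(d)|$ kept away from zero, together settle the nontrivial inclusion $M\subseteq{\rm cl}(M\cap D)$. Note that the paper itself does not prove this lemma but defers to \cite{Wo.2012}; your radial-projection argument is precisely the natural one used there, so nothing further is needed.
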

It is worth mentioning that Lemma \ref{lem-star-shaped-subset}
played a significant role in the papers \cite{Wo.2012} and \cite{Wo.2015}.
Its proof can be found in \cite{Wo.2012}. Here, this lemma will be a helpful tool, again.

We say that $X$ is \textit{smooth at point} $x_o$ if there is a unique functional $f\in J(x_o)$.
Now, we consider a set $\mathcal{N}_{sm}(X):=\{x\in X : X\ {\rm is\ smooth\ at}\ x\}\cup\{0\}$.
In particular, we have
\begin{align}\label{smoothness-rho-plus-minus-f}
x_o\in\mathcal{N}_{sm}(X)\quad\Leftrightarrow\quad\rho_{_+}(x_o,\cdot)=\rho_-(x_o,\cdot)=\|x\|{\rm Re}\,f(\cdot), \ \ f\in J(x_o).
\end{align}
It is known that if $\dim X<\infty$, then $\mathcal{N}_{sm}(X)$ is a
dense, star-shaped subset of $X$ -- cf. \cite{Wo.2012} or \cite{A.S.T}. Thus we
can rewrite Lemma \ref{lem-star-shaped-subset} as
\begin{lemma}\label{lem-star-shaped-subset-sm}
Suppose that $Z$ is a two-dimensional real normed space. Let $\{x,y\}\subseteq Z$ be a linearly independent subset.
If we consider a line $M$ spanned by
the vectors $x,y$ (i.e. $M:=\big\{x+ty\in Z: t\in \mathbb{R}\big\}$), then ${\rm cl}(M\cap \mathcal{N}_{sm}(Z))=M$.
\end{lemma}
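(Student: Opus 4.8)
The plan is to deduce the statement directly from Lemma \ref{lem-star-shaped-subset} by specializing $X := Z$ and $D := \mathcal{N}_{sm}(Z)$, so the whole task reduces to checking that the hypotheses of that lemma are satisfied. Since $\dim Z = 2 < \infty$, the fact recalled just before the statement (see \cite{Wo.2012} or \cite{A.S.T}) tells us that $\mathcal{N}_{sm}(Z)$ is a dense, star-shaped subset of $Z$. Thus $D = \mathcal{N}_{sm}(Z)$ already meets the requirements imposed on the dense star-shaped set in Lemma \ref{lem-star-shaped-subset}, and no separate argument for that is needed here.

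It then remains to verify that $M = \{x + ty : t \in \mathbb{R}\}$ is a closed affine hyperplane with $0 \notin M$. First I would note that $M$ is an affine line in the two-dimensional space $Z$, hence a one-dimensional affine subspace, so its codimension is $2 - 1 = 1$ and $M$ is a hyperplane in the required sense; being finite-dimensional, $M$ is automatically closed. Next I would check $0 \notin M$: if we had $0 \in M$, then $x + ty = 0$ for some $t \in \mathbb{R}$, i.e. $x = -ty$, which contradicts the linear independence of $\{x, y\}$. Hence $0 \notin M$, and all hypotheses of Lemma \ref{lem-star-shaped-subset} are in force. Applying it gives ${\rm cl}(M \cap \mathcal{N}_{sm}(Z)) = M$, which is precisely the assertion.

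I do not expect a genuine obstacle in this argument, since it is essentially a verification that the setting of Lemma \ref{lem-star-shaped-subset} is met. The only substantive input is the density and star-shapedness of $\mathcal{N}_{sm}(Z)$ in finite dimensions; as the excerpt permits us to treat this as a known fact, the remaining work is the routine geometric bookkeeping above. The single point worth stating carefully is the implication ``linear independence of $\{x,y\}$ $\Rightarrow$ $0 \notin M$,'' as this is exactly where the linear-independence hypothesis enters and is what licenses the use of the $0 \notin M$ clause in Lemma \ref{lem-star-shaped-subset}.
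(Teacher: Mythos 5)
Your proposal is correct and follows exactly the paper's route: the paper presents this lemma as a direct specialization of Lemma \ref{lem-star-shaped-subset} (taking $D=\mathcal{N}_{sm}(Z)$, which is dense and star-shaped since $\dim Z<\infty$, and $M$ the affine line), leaving the routine verifications implicit. Your explicit checks that $M$ is a closed affine hyperplane and that linear independence of $\{x,y\}$ forces $0\notin M$ are precisely the details the paper omits.
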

We are now in a position to prove the main result of this paper.
\begin{theorem}\label{T.3.2}
Let $\mathscr{X}$ be a Hilbert $\mathscr{A}$-module, and $x, y \in\mathscr{X}$. Then
\begin{align*}
\rho_{_{+}}(x, y) = \max\Big\{{\rm Re}\,\varphi(\langle x, y\rangle): \, \varphi \in \Omega_x\Big\}.
\end{align*}
\end{theorem}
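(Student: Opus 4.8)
The plan is to establish the two inequalities separately, working directly from the definition $\rho_{_{+}}(x,y)=\|x\|\lim_{t\to0^+}\frac{\|x+ty\|-\|x\|}{t}$ rather than from \eqref{rho-plus-max-property}. Two standard facts drive everything. First, for a positive $a\in\mathscr{A}$ one has $\|a\|=\max\{\varphi(a):\varphi\in\mathcal{S}(\mathscr{A})\}$, the maximum being attained because $\mathcal{S}(\mathscr{A})$ is weak$^*$-compact and $\varphi\mapsto\varphi(a)$ is weak$^*$-continuous. Second, since $t$ is real and $\langle\cdot,\cdot\rangle$ is linear in the first slot with $\langle y,x\rangle=\langle x,y\rangle^*$,
\begin{align*}
\langle x+ty,x+ty\rangle=\langle x,x\rangle+t\big(\langle x,y\rangle+\langle x,y\rangle^*\big)+t^2\langle y,y\rangle,
\end{align*}
so that applying any state $\varphi$ gives $\varphi(\langle x+ty,x+ty\rangle)=\varphi(\langle x,x\rangle)+2t\,{\rm Re}\,\varphi(\langle x,y\rangle)+t^2\varphi(\langle y,y\rangle)$. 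I also note at the outset that the right-hand side of the asserted formula is a genuine maximum: $\Omega_x$ is a weak$^*$-closed subset of the compact state space, hence weak$^*$-compact, and $\varphi\mapsto{\rm Re}\,\varphi(\langle x,y\rangle)$ is weak$^*$-continuous.

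For the inequality $\rho_{_{+}}(x,y)\geq\max\{{\rm Re}\,\varphi(\langle x,y\rangle):\varphi\in\Omega_x\}$, fix $\varphi\in\Omega_x$. For $t>0$ the bound $\|b\|\geq\varphi(b)$ for positive $b$ together with $\varphi(\langle x,x\rangle)=\|x\|^2$ and $\varphi(\langle y,y\rangle)\geq0$ yields $\|x+ty\|^2\geq\varphi(\langle x+ty,x+ty\rangle)\geq\|x\|^2+2t\,{\rm Re}\,\varphi(\langle x,y\rangle)$. Dividing by $2t$ and letting $t\to0^+$ gives $\rho_{_{+}}(x,y)\geq{\rm Re}\,\varphi(\langle x,y\rangle)$, and taking the (attained) supremum over $\varphi\in\Omega_x$ finishes this direction.

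The reverse inequality is the heart of the matter. For each $t>0$ I choose, by \cite[Theorem~3.3.6]{Mu}, a state $\psi_t\in\Omega_{x+ty}$, so that $\|x+ty\|^2=\psi_t(\langle x,x\rangle)+2t\,{\rm Re}\,\psi_t(\langle x,y\rangle)+t^2\psi_t(\langle y,y\rangle)$. Bounding $\psi_t(\langle x,x\rangle)\leq\|x\|^2$ and $\psi_t(\langle y,y\rangle)\leq\|y\|^2$ and rearranging produces
\begin{align*}
\frac{\|x+ty\|^2-\|x\|^2}{2t}\leq{\rm Re}\,\psi_t(\langle x,y\rangle)+\frac{t}{2}\|y\|^2.
\end{align*}
As $t\to0^+$ the left side converges to $\rho_{_{+}}(x,y)$, so I select a net $t_\alpha\to0^+$ along which ${\rm Re}\,\psi_{t_\alpha}(\langle x,y\rangle)$ tends to its limit superior and, using weak$^*$-compactness, pass to a subnet with $\psi_{t_\alpha}\to\psi_0$ weak$^*$ for some state $\psi_0$.

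The main obstacle is to verify that this limit state $\psi_0$ lies in $\Omega_x$. For this I return to the identity $\|x+ty\|^2=\psi_t(\langle x,x\rangle)+2t\,{\rm Re}\,\psi_t(\langle x,y\rangle)+t^2\psi_t(\langle y,y\rangle)$: as $t\to0^+$ the left side tends to $\|x\|^2$ while the two $t$-dependent terms vanish (their coefficients being bounded), forcing $\psi_t(\langle x,x\rangle)\to\|x\|^2$; evaluating along the convergent subnet gives $\psi_0(\langle x,x\rangle)=\|x\|^2$, so $\psi_0\in\Omega_x$. Passing to the limit in the displayed inequality then gives $\rho_{_{+}}(x,y)\leq{\rm Re}\,\psi_0(\langle x,y\rangle)\leq\max\{{\rm Re}\,\varphi(\langle x,y\rangle):\varphi\in\Omega_x\}$, which combines with the lower bound to prove the theorem. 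I remark that this route appears to work directly from the definition of $\rho_{_{+}}$, sidestepping both \eqref{rho-plus-max-property} and the smoothness Lemmas \ref{lem-star-shaped-subset}--\ref{lem-star-shaped-subset-sm}; the delicate point throughout is the net/weak$^*$-compactness passage needed to produce the optimal state $\psi_0$.
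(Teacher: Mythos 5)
Your argument is correct in substance, and it takes a genuinely different --- and considerably more elementary --- route than the paper's. The paper proves the hard inequality by restricting to the real two-dimensional subspace $Z$ spanned by $x,y$, invoking density of smooth points in $Z$ (Lemmas \ref{lem-star-shaped-subset} and \ref{lem-star-shaped-subset-sm}), identifying $\rho_{_+}(x+t_ny,y)$ with ${\rm Re}\,\varphi_n(\langle x+t_ny,y\rangle)$ at smooth points, applying property (P5), passing to a separable subalgebra $\mathscr{A}_o$ to get weak$^*$ \emph{sequential} compactness, and finally extending a state from $\mathscr{A}_o$ to $\mathscr{A}$. You replace all of that machinery by one observation: any state $\psi_t$ norming $x+ty$ satisfies $\psi_t(\langle x+ty,x+ty\rangle)=\psi_t(\langle x,x\rangle)+2t\,{\rm Re}\,\psi_t(\langle x,y\rangle)+t^2\psi_t(\langle y,y\rangle)$, which simultaneously yields your differential inequality and forces $\psi_t(\langle x,x\rangle)\to\|x\|^2$. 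This kills the case split on linear dependence, the smoothness lemmas, (P5), the separability detour (your subnets make it unnecessary), and the state-extension step; your lower bound likewise bypasses \eqref{rho-plus-max-property}. What the paper's approach buys instead is a showcase of the smooth-point technique of \cite{Wo.2012}, but as a proof of this theorem your route is shorter and self-contained.

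One point does need repair. You twice invoke weak$^*$-compactness of $\mathcal{S}(\mathscr{A})$: once to assert a priori that the right-hand side is an attained maximum, and once to extract the limit ``state'' $\psi_0$. If $\mathscr{A}$ is non-unital, $\mathcal{S}(\mathscr{A})$ is \emph{not} weak$^*$-compact --- it is not even weak$^*$-closed (in $C_0(\mathbb{R})$ the point evaluations at $n$ tend weak$^*$ to $0$). What Alaoglu actually gives is compactness of the quasi-state space $\{\psi\in\mathscr{A}^*:\psi\geq0,\ \|\psi\|\leq1\}$, so your subnet limit $\psi_0$ is a priori only a positive functional of norm at most $1$. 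The defect is cured by the very fact you prove: assuming $x\neq0$ (the case $x=0$ is trivial), the identity $\psi_0(\langle x,x\rangle)=\|x\|^2=\|\langle x,x\rangle\|$ forces $\|\psi_0\|\geq1$, hence $\|\psi_0\|=1$, so $\psi_0$ is a state and lies in $\Omega_x$; the attainment of the maximum then comes from exhibiting $\psi_0$ rather than from any a priori compactness of $\Omega_x$. (The same argument shows $\Omega_x$ is weak$^*$-compact for $x\neq0$; and, to be fair, the paper's own proof glosses over the identical issue when it claims $\mathcal{S}(\mathscr{A}_o)$ is weak$^*$-compact merely because it is norm-closed and convex.) With this one-line patch your proof is complete.
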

\begin{proof}
We may and shall assume that $x\neq0$ otherwise the statement trivially holds.
Let $\varphi\in\Omega_x$, that is, $\varphi \in \mathcal{S}(\mathscr{A})$ and $\varphi(\langle x, x\rangle) = \|x\|^2$.
It is easy to check that $\frac{1}{\|x\|}\varphi(\langle x, \cdot\rangle)\in J(x)$. Thus the
property \eqref{rho-plus-max-property} yields
$\|x\|\mbox{Re}\,\frac{1}{\|x\|}\varphi(\langle x, y\rangle)\leq\rho'_+(x,y)$.
Hence $\mbox{Re}\,\varphi(\langle x,y\rangle)\leq\rho_{_+}(x,y)$.
Passing to the supremum over $\fii\in\Omega_x$ we get
\begin{center}
$\sup\Big\{\mbox{Re}\,\varphi(\langle x, y\rangle): \, \varphi \in \Omega_x\Big\}\leq\rho_{_+}(x,y)$.
\end{center}
To complete the proof, we must find $\fii \in \Omega_x$
such that $\mbox{Re}\,\fii(\langle x, y\rangle) = \rho_{_{+}}(x, y)$. Then we will able to
write "max" instead of "sup".
We consider two cases. First assume that $x$ and $y$ are linearly dependent, i.e. $y=\alpha x$ with
some number $\alpha$. Fix $\fii \in \Omega_x$. Then
\begin{align*}
\rho_{_+}(x,y)&=\rho_{_+}(x,\alpha x)\stackrel{{\rm(P3)}}{=}{\rm Re}\big(\alpha\|x\|^2\big)={\rm Re}\big(\alpha\fii(\langle x, x\rangle)\big)\\
&={\rm Re}\,\fii(\langle x, \alpha x\rangle)={\rm Re}\,\fii(\langle x, y\rangle).
\end{align*}
So, the first case is complete. Now, suppose that $\{x,y\}$ is a linearly independent subset.
Let us define a real subspace $Z\subseteq\mathscr{X}$ by the formula
\begin{center}
$Z:=\big\{\alpha x +\beta y\in \mathscr{X}: \alpha,\beta\in \mathbb{R} \big\}$.
\end{center}
Put $M:=\big\{x+ty\in Z: t\in \mathbb{R}\big\}$. It follows from
Lemma \ref{lem-star-shaped-subset-sm} that ${\rm cl}(M\cap \mathcal{N}_{sm}(Z))=M$. This equality yields, in
particular, that there exists a sequence $\{t_n:n=1,2,\ldots\}$ in $(0,+\infty)$ such that $t_n\rightarrow 0^+$ and
$x+t_ny\in \mathcal{N}_{sm}(Z)$.

Let us define for the moment a closed subalgebra $\mathscr{A}_o\subseteq\mathscr{A}$
spanned by the vectors
$\langle x,x \rangle$, $\langle y,y \rangle$, $\langle x,y \rangle$ and $\langle y,x \rangle$.
In particular, we have
\begin{center}
$\langle x+t_ny, x+t_ny\rangle,\ \langle x+t_ny,y\rangle\in \mathscr{A}_o$.
\end{center}
Moreover, it is easily seen that $\mathscr{A}_o$ is separable.
Also, there exist states $\fii_1,\fii_2,\fii_3,\ldots\in\mathcal{S}(\mathscr{A}_o)$ such that
\begin{align}\label{fii-n-attains-norm2}
\fii_n(\langle x+t_ny, x+t_ny\rangle) = \|x+t_ny\|^2.
\end{align}
Now our attention must focus on the real two-dimensional space $Z$.
Fix $n\in \mathbb{N}$. It is not difficult to see that
$\frac{1}{\|x+t_ny\|}{\rm Re}\,\fii_n\big(\langle x+t_ny, \cdot\rangle \big)\big|_{_Z}\colon Z\to \mathbb{R}$
is a $\mathbb{R}$-linear functional and
\begin{align}\label{smoothness-rho-for-z}
\frac{1}{\|x+t_ny\|}{\rm Re}\,\fii_n\big(\langle x+t_ny, \cdot\rangle \big)\big|_{_Z}\in J(x+t_ny)\big|_{_Z}.
\end{align}
The space $Z$ is smooth at $x+t_ny$. Thus, from \eqref{smoothness-rho-plus-minus-f} and \eqref{smoothness-rho-for-z} it is known that
\begin{align}\label{rho-tn-fii-tn}
\rho_{_+}(x+t_ny,y)={\rm Re}\,\fii_n\big(\langle x+t_ny,y\rangle \big).
\end{align}
By Alaoglu's Theorem, we known that the closed unit ball $B_{\mathscr{A}_o^*}$ is weakly* compact.
It is easy to check that subset $\mathcal{S}(\mathscr{A}_o)\subseteq \mathscr{A}_o^*$ is a norm-closed convex
subset of the weakly* compact ball of $B_{\mathscr{A}_o^*}$. Therefore $\mathcal{S}(\mathscr{A}_o)$ is weakly* compact.
Since $\mathscr{A}_o$ is separable, $\mathcal{S}(\mathscr{A}_o)$ is weakly* sequentially compact.
Thus, there are an element $\fii_o\in B_{\mathscr{A}_o^*}$ and a
subsequence $\big(\fii_{n_k} \big)_{k=1}^{\infty}\subseteq B_{\mathscr{A}_o^*}$
such that $\fii_{n_k} \stackrel{w*}{\longrightarrow} \fii_{o}$.
Since $\langle x+t_{n_k}y,y\rangle\stackrel{\|\cdot\|}{\longrightarrow}\langle x,y\rangle$, we conclude that
\begin{align}\label{subsequence-re-fii-1}
{\rm Re}\,\fii_{n_k}\big(\langle x+t_{n_k}y,y\rangle \big)\longrightarrow {\rm Re}\fii_o(\langle x,y\rangle).
\end{align}
Similarly, since $\langle x+t_{n_k}y,x+t_{n_k}y\rangle\stackrel{\|\cdot\|}{\longrightarrow}\langle x,x\rangle$, it
follows that
\begin{align}\label{subsequence-re-fii-2}
\fii_{n_k}\big(\langle x+t_{n_k}y,x+t_{n_k}y\rangle \big)\longrightarrow \fii_o(\langle x,x\rangle).
\end{align}
Combining the three conditions (P5)-\eqref{rho-tn-fii-tn}-\eqref{subsequence-re-fii-1} we deduce that
\begin{align*}
{\rm Re}\,\fii_{o}\big(\langle x,y\rangle \big)=\rho_{_+}(x,y).
\end{align*}
Furthermore, combining \eqref{fii-n-attains-norm2} with \eqref{subsequence-re-fii-2} yields
$\fii_o(\langle x,x\rangle)=\|x\|^2$.

Now we back to the $C^*$-algebra $\mathscr{A}$ and we go to the dual normed
space $\mathscr{A}^*$. Namely, since $\fii_o\colon \mathscr{A}_o\to \mathbb{C}$ is a state on $\mathscr{A}_o$,
it follows that there exists a state $\fii$ on $\mathscr{A}$ such
that $\fii|_{_{\mathscr{A}_o}}=\fii_o$ (see e.g. \cite[p.259]{acfa_conway}). From this
we get ${\rm Re}\,\fii(\langle x,y\rangle)=\rho_{_+}(x,y)$ and
$\fii(\langle x,x\rangle)=\|x\|^2$. The proof is completed.
\end{proof}
Now we are able to calculate a formula for the norm derivative $\rho_{_{-}}$ in Hilbert $C^*$-modules.
\begin{theorem}\label{C.4.2}
Let $\mathscr{X}$ be a Hilbert $\mathscr{A}$-module, and $x, y \in\mathscr{X}$. Then
\begin{align*}
\rho_{_{-}}(x, y) = \min\Big\{{\rm Re}\,\varphi(\langle x, y\rangle): \, \varphi \in \Omega_x\Big\}.
\end{align*}
\end{theorem}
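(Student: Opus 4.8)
The plan is to obtain this formula as an immediate consequence of Theorem \ref{T.3.2}, rather than repeating the entire limiting and weak* compactness argument for the left derivative $\rho_{_{-}}$. The key observation is property (P2), which relates the two one-sided norm derivatives by a sign change: reading it with the plus sign in front gives $\rho_{_{+}}(x,-y) = -\rho_{_{-}}(x,y)$, equivalently $\rho_{_{-}}(x,y) = -\rho_{_{+}}(x,-y)$. Thus the whole substantive burden of the proof has already been discharged in Theorem \ref{T.3.2}, and what remains is purely a matter of bookkeeping.

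First I would apply Theorem \ref{T.3.2} with the second argument $y$ replaced by $-y$, which yields
\[
\rho_{_{+}}(x,-y) = \max\big\{{\rm Re}\,\varphi(\langle x, -y\rangle): \varphi \in \Omega_x\big\}.
\]
Next I would use that the $\mathscr{A}$-valued inner product is conjugate-linear in its second slot (a consequence of $\langle x, y\rangle^* = \langle y, x\rangle$), so that $\langle x, -y\rangle = -\langle x, y\rangle$ and hence ${\rm Re}\,\varphi(\langle x, -y\rangle) = -{\rm Re}\,\varphi(\langle x, y\rangle)$ for every state $\varphi$. Pulling this minus sign out of the maximum, and using $\max_{\varphi}(-a_{\varphi}) = -\min_{\varphi} a_{\varphi}$, I obtain
\[
\rho_{_{+}}(x,-y) = -\min\big\{{\rm Re}\,\varphi(\langle x, y\rangle): \varphi \in \Omega_x\big\}.
\]
Substituting into $\rho_{_{-}}(x,y) = -\rho_{_{+}}(x,-y)$ then gives the asserted formula, with the maximum converted into a minimum exactly as expected. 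I would also point out that the feasible set $\Omega_x$ is unchanged throughout, since it depends only on $x$ and not on the second argument; this is precisely what permits the clean $\max \rightsquigarrow \min$ passage over a fixed index set.

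Because everything reduces to the already established Theorem \ref{T.3.2}, there is no genuinely hard step here. The only point that warrants a moment's attention is the sign bookkeeping: one must confirm that (P2) is being read in the correct direction, and that the conjugation in the second variable introduces no spurious complex factor, which it does not, since the scalar $-1$ is real and therefore fixed by conjugation. Once these are checked, the identity follows in a single line.
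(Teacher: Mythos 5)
Your proposal is correct and is essentially identical to the paper's own proof: both deduce the formula from (P2) via $\rho_{_{-}}(x,y) = -\rho_{_{+}}(x,-y)$, apply Theorem~\ref{T.3.2} to the pair $(x,-y)$, and convert the resulting maximum into a minimum by pulling out the sign. Your additional remarks on conjugate-linearity in the second slot and the invariance of $\Omega_x$ are sound (and correctly note that $-1$ is fixed by conjugation), though the paper leaves them implicit.
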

\begin{proof}
Applying (P2) and Theorem~\ref{T.3.2} we get
\begin{align*}
\rho_{_{-}}(x, y) &=-\rho_{_{+}}(x, -y)= - \max\Big\{{\rm Re}\,\varphi(\langle x, -y\rangle):  \, \varphi \in \Omega_x\Big\}
\\& = \min\Big\{{\rm Re}\,\varphi(\langle x, y\rangle): \,  \varphi \in \Omega_x\Big\},
\end{align*}
and we are done.
\end{proof}
Recall that every $C^*$-algebra $\mathscr{A}$ can be regarded as a Hilbert $C^*$-module over
itself with the inner product $\langle a, b\rangle := a^*b$.
Thus, as a consequence of Theorem~\ref{T.3.2} (and Theorem~\ref{C.4.2}) we have the following result.
\begin{corollary}\label{P.2.2}
Let $\mathscr{A}$ be a $C^*$-algebra, and $a, b\in \mathscr{A}$. Then
\begin{align*}
\rho_{_{+}}(a, b) = \max\Big\{{\rm Re}\,\varphi(a^*b): \, \varphi \in \Omega_a\Big\},\\
\rho_{_{-}}(a, b) = \min\Big\{{\rm Re}\,\varphi(a^*b): \, \varphi \in \Omega_a\Big\}.
\end{align*}
\end{corollary}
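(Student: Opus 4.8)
The plan is to recognize this corollary as an immediate specialization of the two main theorems. The key point, already recalled in the Preliminaries, is that $\mathscr{A}$ equipped with the $\mathscr{A}$-valued inner product $\langle a, b\rangle := a^*b$ is itself a Hilbert $\mathscr{A}$-module. So the entire strategy is to invoke Theorem~\ref{T.3.2} and Theorem~\ref{C.4.2} with $\mathscr{X}=\mathscr{A}$ and to read off what the objects appearing there become under this particular inner product.

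First I would verify that the norm derivatives $\rho_{_{\pm}}$ are unambiguous, i.e. that the Hilbert-module norm on $\mathscr{A}$ coincides with its original $C^*$-norm. This is exactly the $C^*$-identity: for $a\in\mathscr{A}$ one has $\|\langle a, a\rangle\|^{\frac{1}{2}} = \|a^*a\|^{\frac{1}{2}} = \|a\|$, so the two norms agree and hence so do the corresponding norm derivatives. Once this identification is in place, the mappings $\rho_{_{+}}$ and $\rho_{_{-}}$ computed in the module sense are precisely the norm derivatives of $\big(\mathscr{A}, \|\cdot\|\big)$.

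Next I would substitute $x=a$ and $y=b$ directly into Theorem~\ref{T.3.2}. Here $\langle a, b\rangle = a^*b$ and $\langle a, a\rangle = a^*a$, so the index set becomes $\Omega_a = \big\{\varphi\in\mathcal{S}(\mathscr{A}) : \varphi(a^*a) = \|a\|^2\big\}$, and the theorem yields
\begin{align*}
\rho_{_{+}}(a, b) = \max\big\{{\rm Re}\,\varphi(a^*b): \, \varphi\in\Omega_a\big\}.
\end{align*}
The formula for $\rho_{_{-}}$ follows in exactly the same manner from Theorem~\ref{C.4.2}, replacing $\max$ by $\min$. Since the whole argument reduces to a substitution into results already proved, there is no genuine obstacle; the only step requiring a line of justification is the identification of the module norm with the $C^*$-norm via the $C^*$-identity, after which both formulas are immediate.
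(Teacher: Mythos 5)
Your proposal is correct and follows exactly the paper's route: the corollary is stated there as an immediate consequence of Theorems~\ref{T.3.2} and~\ref{C.4.2} applied to $\mathscr{A}$ viewed as a Hilbert $C^*$-module over itself with $\langle a,b\rangle := a^*b$. Your additional remark that the module norm coincides with the $C^*$-norm via the $C^*$-identity $\|a^*a\| = \|a\|^2$ is a worthwhile detail the paper leaves implicit, but it does not change the argument.
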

Let $\mathbb{T}(\mathcal{H})$ be trace-class operators on a Hilbert space $\big(\mathcal{H}, [\cdot, \cdot]\big)$.
It is well known that (see e.g. \cite[Theorem 4.2.1]{Mu}) every state $\varphi$ of $\mathbb{K}(\mathcal{H})$
is of the form $a \rightarrow {\rm tr}(pa)$ for some positive trace one operator $p\in\mathbb{T}(\mathcal{H})$.
Therefore, as an immediate consequence of Theorem~\ref{T.3.2} and Theorem~\ref{C.4.2}, we have the following result.
\begin{proposition}\label{P.5.2}
Let $\mathscr{X}$ be a Hilbert $\mathbb{K}(\mathcal{H})$-module, and $x, y \in\mathscr{X}$. Then
\begin{align*}
\rho_{_{+}}(x, y) = \max\Big\{{\rm Re}\,{\rm tr}\big(p\langle x, y\rangle\big):
\, p\in\mathbb{P}(\mathcal{H}),
{\rm tr}\big(p\langle x, x\rangle\big) = \|x\|^2\Big\},\\
\rho_{_{-}}(x, y) = \min\Big\{{\rm Re}\,{\rm tr}\big(p\langle x, y\rangle\big):
\, p\in\mathbb{P}(\mathcal{H}),
{\rm tr}\big(p\langle x, x\rangle\big) = \|x\|^2\Big\},
\end{align*}
where $\mathbb{P}(\mathcal{H}) = \big\{p\in\mathbb{T}(\mathcal{H}):\, p\,\mbox{is positive trace one}\,\big\}$.
\end{proposition}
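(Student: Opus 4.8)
The plan is to treat this as a direct transcription of Theorem~\ref{T.3.2} and Theorem~\ref{C.4.2} into the language of trace-class operators, using the quoted description of the state space of $\mathbb{K}(\mathcal{H})$. First I would set $\mathscr{A} = \mathbb{K}(\mathcal{H})$ and recall from Theorem~\ref{T.3.2} that
\[
\rho_{_{+}}(x, y) = \max\big\{{\rm Re}\,\varphi(\langle x, y\rangle): \varphi \in \Omega_x\big\},
\]
where $\Omega_x = \{\varphi \in \mathcal{S}(\mathbb{K}(\mathcal{H})): \varphi(\langle x, x\rangle) = \|x\|^2\}$. The entire content of the proposition then lies in rewriting the index set $\Omega_x$ and the functional $\varphi$ via the correspondence between states on $\mathbb{K}(\mathcal{H})$ and elements of $\mathbb{P}(\mathcal{H})$.

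The key step is to make this correspondence explicit and bijective. The quoted result \cite[Theorem~4.2.1]{Mu} tells me that every $\varphi \in \mathcal{S}(\mathbb{K}(\mathcal{H}))$ is of the form $\varphi(a) = {\rm tr}(pa)$ for a unique $p \in \mathbb{P}(\mathcal{H})$, uniqueness following from the non-degeneracy of the trace pairing. For the reverse inclusion I would verify that each $p \in \mathbb{P}(\mathcal{H})$ does define a state: positivity of $a \mapsto {\rm tr}(pa)$ is immediate from ${\rm tr}(pa) = {\rm tr}(p^{1/2}ap^{1/2}) \geq 0$ for $a \geq 0$, while the norm equals ${\rm tr}(p) = 1$. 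Hence $\varphi \mapsto p$ is a bijection of $\mathcal{S}(\mathbb{K}(\mathcal{H}))$ onto $\mathbb{P}(\mathcal{H})$.

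Under this bijection the defining condition $\varphi(\langle x, x\rangle) = \|x\|^2$ becomes ${\rm tr}(p\langle x, x\rangle) = \|x\|^2$, so that $\Omega_x$ is carried exactly onto $\{p \in \mathbb{P}(\mathcal{H}): {\rm tr}(p\langle x, x\rangle) = \|x\|^2\}$, and the objective ${\rm Re}\,\varphi(\langle x, y\rangle)$ becomes ${\rm Re}\,{\rm tr}(p\langle x, y\rangle)$. Substituting into the formula of Theorem~\ref{T.3.2} yields the asserted expression for $\rho_{_{+}}$, and the formula for $\rho_{_{-}}$ follows identically by starting instead from the ``min'' formula of Theorem~\ref{C.4.2}. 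I do not anticipate any genuine obstacle: the only thing that could go wrong is the direction of the correspondence, and the easy verification that every $p \in \mathbb{P}(\mathcal{H})$ yields a state closes that gap, so both the maximum and the minimum over $\Omega_x$ translate verbatim into the corresponding extremum over $\mathbb{P}(\mathcal{H})$.
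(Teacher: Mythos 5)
Your proposal is correct and follows essentially the same route as the paper, which likewise deduces the proposition as an immediate consequence of Theorems~\ref{T.3.2} and \ref{C.4.2} via the correspondence (from \cite[Theorem~4.2.1]{Mu}) between states on $\mathbb{K}(\mathcal{H})$ and positive trace-one operators. Your extra verification that every $p\in\mathbb{P}(\mathcal{H})$ indeed induces a state just makes explicit what the paper leaves implicit.
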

We continue this section by applying our results to get an explicit formula for
the norm derivatives $\rho_{_{\pm}}$ of certain elements in Hilbert $C^*$-modules.
\begin{theorem}\label{P.10.2}
Let $\mathscr{X}$ be a Hilbert $\mathscr{A}$-module, and $x\in\mathscr{X}$. Then
\begin{align}\label{equality-rho-plus-x4-minus}
\rho_{_{+}}\big(x, x\langle x, x\rangle\big) = \|x\|^4=\rho_{_{-}}\big(x, x\langle x, x\rangle\big) .
\end{align}
\end{theorem}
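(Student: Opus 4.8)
The plan is to apply the two formulas just established, Theorem~\ref{T.3.2} and Theorem~\ref{C.4.2}, and to show that the single real number $\varphi\big(\langle x, x\langle x,x\rangle\rangle\big)$ is in fact \emph{constant} as $\varphi$ ranges over $\Omega_x$, with constant value $\|x\|^4$; once this is known, the maximum and the minimum over the nonempty set $\Omega_x$ both collapse to $\|x\|^4$, which is exactly \eqref{equality-rho-plus-x4-minus}. First I would reduce the inner product to a single algebra element. Set $a:=\langle x,x\rangle$; then $a\ge 0$ and $\|a\|=\|x\|^2$. Using the $\mathscr{A}$-linearity of the inner product together with the self-adjointness $a^*=a$, a direct computation gives $\langle x, x\langle x,x\rangle\rangle=\langle x,x\rangle\langle x,x\rangle=a^2$, which is again positive, so $\varphi(a^2)$ is a nonnegative real and the symbol ${\rm Re}$ may be dropped. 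Theorem~\ref{T.3.2} and Theorem~\ref{C.4.2} then yield
\[
\rho_{_+}\big(x, x\langle x,x\rangle\big)=\max_{\varphi\in\Omega_x}\varphi(a^2), \qquad \rho_{_-}\big(x, x\langle x,x\rangle\big)=\min_{\varphi\in\Omega_x}\varphi(a^2).
\]

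The heart of the argument -- and the step I expect to be the main obstacle -- is the claim that $\varphi(a^2)=\|x\|^4$ for \emph{every} $\varphi\in\Omega_x$, i.e. that a state attaining the norm of the positive element $a$ automatically attains the norm of $a^2$. To prove it, observe that $\varphi\in\Omega_x$ means precisely $\varphi(a)=\|x\|^2=\|a\|$. Since $a$ is positive, $0\le a\le\|a\|1$ (passing to the unitization of $\mathscr{A}$ and to the canonical extension of $\varphi$ if $\mathscr{A}$ is nonunital), so the element $p:=\|a\|1-a$ is positive and satisfies $\varphi(p)=\|a\|-\varphi(a)=0$. The standard fact that a positive functional annihilates every product with a positive element on which it vanishes -- a one-line consequence of the Cauchy--Schwarz inequality for $\varphi$, namely the special case of \eqref{inequality-positive-functional} with $b=p^{1/2}$ -- gives $\varphi(pa)=0$, that is, $\|a\|\varphi(a)-\varphi(a^2)=0$. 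Hence $\varphi(a^2)=\|a\|\varphi(a)=\|a\|^2=\|x\|^4$.

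Since the value $\varphi(a^2)=\|x\|^4$ is independent of $\varphi\in\Omega_x$, both the maximum and the minimum in the displayed formulas equal $\|x\|^4$, which establishes $\rho_{_+}\big(x, x\langle x,x\rangle\big)=\|x\|^4=\rho_{_-}\big(x, x\langle x,x\rangle\big)$. I would also note, as a remark, that the conclusion can be reached \emph{without} invoking Theorem~\ref{T.3.2}: one checks directly that $\langle x+tx\langle x,x\rangle,\, x+tx\langle x,x\rangle\rangle=a(1+ta)^2$, and because $\|a\|=\max\sigma(a)\in\sigma(a)$ while $\lambda\mapsto\lambda(1+t\lambda)^2$ is increasing on $[0,\|a\|]$ for small $t$, the continuous functional calculus yields $\|x+tx\langle x,x\rangle\|^2=\|a\|(1+t\|a\|)^2$; differentiating at $t=0$ from either side returns $\|a\|^2=\|x\|^4$ and simultaneously shows $\rho_{_+}=\rho_{_-}$ here. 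The route through Theorem~\ref{T.3.2} and Theorem~\ref{C.4.2} is shorter, so that is the one I would write up in full.
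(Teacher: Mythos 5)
Your proof is correct, and its core coincides with the paper's: both arguments hinge on showing that every $\varphi\in\Omega_x$ satisfies $\varphi\big(\langle x,x\rangle^2\big)=\|x\|^4$ (the paper's identity \eqref{I.1.P.10.2}), and both then invoke Theorems \ref{T.3.2} and \ref{C.4.2}. Still, there are two differences worth noting. First, you prove the key identity by passing to the unitization, setting $p:=\|a\|e-a$ with $a=\langle x,x\rangle$, observing $\varphi(p)=0$, and using Cauchy--Schwarz to conclude $\varphi(pa)=0$; the paper instead sandwiches
$\|x\|^4=|\varphi(ae)|^2\leq\varphi(a^2)\varphi(e^2)\leq\varphi(a^2)\leq\|a^2\|\leq\|x\|^4$ via \eqref{inequality-positive-functional}. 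Both are one-line applications of Cauchy--Schwarz in the unitization, so this is a matter of taste. Second --- and this is a genuine streamlining --- you apply Theorems \ref{T.3.2} and \ref{C.4.2} \emph{directly} to the pair $\big(x, x\langle x,x\rangle\big)$, noting that $\varphi\mapsto\varphi(a^2)$ is constant on $\Omega_x$, so max and min coincide; the paper takes a detour, applying the theorems to the auxiliary vector $\|x\|^2x-x\langle x,x\rangle$ to get $\rho_{_{\pm}}\big(x,\|x\|^2x-x\langle x,x\rangle\big)=0$ and then unwinding via (P3) and (P2). Your route makes that auxiliary step unnecessary. Finally, your closing remark is a genuinely different, self-contained argument the paper does not contain: from $\langle x+tx\langle x,x\rangle,\,x+tx\langle x,x\rangle\rangle=a(e+ta)^2$ and the functional calculus one gets $\|x+tx\langle x,x\rangle\|^2=\|a\|(1+t\|a\|)^2$ for small $|t|$, which yields \eqref{equality-rho-plus-x4-minus} by direct differentiation, with no appeal to states or to Theorem \ref{T.3.2} at all; it even shows the map $t\mapsto\|x+tx\langle x,x\rangle\|$ is affine near $0$, which is slightly more than the theorem asserts.
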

\begin{proof}
Fix $\fii\in \mathcal{S}(\mathscr{A})$ such
that $\varphi(\langle x, x\rangle) = \|x\|^2$. We may suppose
that $\mathscr{A}$ is a Banach algebra with identity (by going to
extensions $\tilde{\mathscr{A}}$ and $\tilde{\varphi}=\fii\big|_{_{\mathscr{A}}}$ if
necessary; see \cite[pp.194, 259]{acfa_conway}). So, let $e$ be the identity of $\mathscr{A}$. Then
\begin{align*}
\|x\|^4 = |\varphi\big(\langle x, x\rangle e\big)|^2 \stackrel{\eqref{inequality-positive-functional}}{\leq}
\varphi\big(\langle x, x\rangle^2\big) \varphi(e^2)
\leq \varphi\big(\langle x, x\rangle^2\big) \leq \big\|\langle x, x\rangle^2\big\|\leq\|x\|^4.
\end{align*}
This implies
\begin{align}\label{I.1.P.10.2}
\varphi\big(\langle x, x\rangle^2\big) = \|x\|^4.
\end{align}
Therefore we obtain the following equalities
\begin{align*}
\mbox{Re}\,\varphi\Big(\big\langle x, \|x\|^2x - x\langle x, x\rangle\big\rangle\Big)&
= \|x\|^2\mbox{Re}\varphi(\langle x,x\rangle)- \mbox{Re}\varphi\Big(\big\langle x,x\langle x, x\rangle\big\rangle\Big)
\\& = \|x\|^2\mbox{Re}\|x\|^2- \mbox{Re}\varphi\big(\langle x,x\rangle\langle x,x\rangle\big)
\\& \stackrel{\eqref{I.1.P.10.2}}{=} \|x\|^2\|x\|^2-\|x\|^4 = 0.
\end{align*}
Now we apply Theorem  \ref{T.3.2} (resp. Theorem \ref{C.4.2}). Namely, since $\fii$ was
arbitrarily chosen from $\Omega_x$, passing to the
maximum (resp. minimum) over $\fii\in\Omega_x$, we get
\begin{center}
$\rho_{_{+}}\big(x, \|x\|^2x - x\langle x, x\rangle\big)=0$\quad and\quad $\rho_{_{-}}\big(x, \|x\|^2x - x\langle x, x\rangle\big)=0$.
\end{center}
So, from (P3) and (P2) we obtain, respectively,
$\|x\|^2\!\cdot\!\|x\|^2-\rho_{_{-}}\big(x, \langle x, x\rangle\big)=0$ and
$\|x\|^2\!\cdot\!\|x\|^2-\rho_{_{+}}\big(x,x\langle x, x\rangle\big)=0$,
and we may consider \eqref{equality-rho-plus-x4-minus} as shown.
\end{proof}
An element $x$ in a normed space $(X,\|\!\cdot\|)$
is called \textit{norm-parallel} to another element $y\in X$ (see \cite{Z.M.2015} and the references therein),
denoted by $x\parallel y$, if $\|x+\xi y\|=\|x\|+\|y\|$ for some complex unit $\xi$.
In the framework of inner product spaces, the norm-parallel relation is exactly the usual
vectorial parallel relation, that is, $x\parallel y$ if and only if $x$ and $y$ are linearly dependent. In
the setting of normed linear spaces, two linearly dependent vectors are norm-parallel, but the converse is false in general.
Many characterizations of the norm-parallelism for
operators spaces $\mathbb{B}(X,Y)$ and
elements of an arbitrary Hilbert $C^*$-module were
given in \cite{BCMWZ-2019}, \cite{Wo.2017-indag}, \cite{Z.M.2015} and \cite{ZM-2019}.
\begin{theorem}\label{theorem-daugavet-010}
Suppose that $\mathscr{X}$ is a Hilbert $\mathscr{A}$-module, and
let $x\in\mathscr{X}$, $\alpha,\beta\in(0,+\infty)$. Then
\begin{align}\label{daugavet-eq-in-mod}
\big\|\alpha x+\beta x\langle x, x\rangle\big\|=\alpha\|x\|+\beta\|x\|^3.
\end{align}
In particular, $x$ is norm-parallel to $x\langle x, x\rangle$.
\end{theorem}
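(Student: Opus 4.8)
The plan is to sandwich the quantity $\|\alpha x+\beta y\|$, where $y:=x\langle x,x\rangle$, between two matching bounds; I may assume $x\neq0$, since otherwise every term vanishes. First I would record the auxiliary fact that $\|y\|=\|x\|^3$. Writing $p:=\langle x,x\rangle\geq0$, so that $\|p\|=\|x\|^2$, the inner-product relations give $\langle y,y\rangle=p^*\langle x,x\rangle p=p^3$, whence $\|y\|^2=\|p^3\|=\|p\|^3=\|x\|^6$, using the standard $C^*$-fact $\|p^n\|=\|p\|^n$ for a positive element. With this in hand the upper bound is immediate from the triangle inequality: $\|\alpha x+\beta y\|\leq\alpha\|x\|+\beta\|y\|=\alpha\|x\|+\beta\|x\|^3$ for all $\alpha,\beta>0$.

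For the matching lower bound I would exploit the norm derivative. Consider $g(t):=\|x+ty\|$, which is convex on $\mathbb{R}$ as the composition of the norm with an affine map. By the definition of $\rho_{_+}$ its right derivative at $0$ is $g'_+(0)=\rho_{_+}(x,y)/\|x\|$, and Theorem~\ref{P.10.2} identifies $\rho_{_+}(x,y)=\rho_{_+}\big(x,x\langle x,x\rangle\big)=\|x\|^4$, so $g'_+(0)=\|x\|^3$. Since a convex function dominates its supporting line at $0$, this yields $g(t)\geq\|x\|+t\|x\|^3$ for every $t\geq0$. Taking $t=\beta/\alpha>0$ and using positive homogeneity, $\|\alpha x+\beta y\|=\alpha\,g(\beta/\alpha)\geq\alpha\|x\|+\beta\|x\|^3$, which matches the upper bound and proves \eqref{daugavet-eq-in-mod}.

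Finally, the norm-parallel claim should drop out by specializing: setting $\alpha=\beta=1$ and choosing the complex unit $\xi=1$ gives $\|x+x\langle x,x\rangle\|=\|x\|+\|x\|^3=\|x\|+\|x\langle x,x\rangle\|$, which is exactly the defining relation for $x\parallel x\langle x,x\rangle$.

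The step I expect to be the main obstacle is the lower bound: one must invoke convexity to promote the infinitesimal information carried by $\rho_{_+}$ (the right derivative at $0$, supplied by Theorem~\ref{P.10.2}) into a global estimate valid at the finite parameter $t=\beta/\alpha$, being careful that the supporting-line inequality is applied on the side $t\geq0$ where $g'_+(0)$ governs it, and that the rescaling by $\alpha$ respects positive homogeneity. The remaining ingredients — the $C^*$-identity computation of $\|y\|$ and the triangle inequality — are routine bookkeeping.
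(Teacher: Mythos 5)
Your proof is correct and is essentially the paper's own argument: both proofs sandwich $\big\|\alpha x+\beta x\langle x, x\rangle\big\|$ between the triangle-inequality upper bound and a matching lower bound coming from $\rho_{_{+}}\big(x, x\langle x, x\rangle\big)=\|x\|^4$ (Theorem~\ref{P.10.2}). Your convexity/supporting-line step is just an unpacked form of the paper's chain (P3)--(P4)--(P1), since $\rho_{_{+}}(x,x+ty)\leq\|x\|\cdot\|x+ty\|$ \emph{is} the supporting-line estimate; the only extra (and harmless) content is your explicit computation $\big\|x\langle x, x\rangle\big\|=\|x\|^3$, which the paper also needs implicitly for the parallelism claim.
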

\begin{proof}
It is clear that we may assume $x\neq 0$. Then we have
\begin{align*}
\alpha\|x\|^2+\beta\|x\|^4&\stackrel{\eqref{equality-rho-plus-x4-minus}}{=}\alpha\|x\|^2+\beta\rho_{_{+}}\big(x, x\langle x, x\rangle\big)\stackrel{{\rm (P3,P4)}}{=}\rho_{_{+}}\big(x,\alpha x+\beta x\langle x, x\rangle\big)\\
&\stackrel{{\rm (P1)}}{\leq}\|x\|\!\cdot\!\big\|\alpha x+\beta x\langle x, x\rangle\big\|\leq\|x\|\!\cdot\!\Big(\alpha \|x\|+\beta \big\|x\langle x, x\rangle\big\|\Big)\\
&\leq\|x\|\!\cdot\!\Big(\alpha \|x\|+\beta \|x\|\!\cdot\!\big\|\langle x, x\rangle\big\|\Big)=\alpha\|x\|^2+\beta\|x\|^4.
\end{align*}
Thus the string of inequalities becomes a string of equalities and we obtain
$\alpha\|x\|^2+\beta\|x\|^4=\|x\|\!\cdot\!\big\|\alpha x+\beta x\langle x, x\rangle\big\|$.
Dividing by $\|x\|$, we have \eqref{daugavet-eq-in-mod}.
\end{proof}
In the context of bounded linear operators on normed spaces, the well-known \textit{Daugavet equation}
$\|I+T\|= 1+\|T\|$ is a particular case of parallelism.
We refer to the book \cite{Werner-book-1996} and more recent the paper \cite{Wo.2017-sm} for motivations,
history, various aspects and problems connected with the Daugavet equation. It is worth mentioning that a generalized
Daugavet equation $\|T+S\|=\|T\|+\|S\|$ is one useful property in solving a variety of problems in approximation
theory (cf. \cite{Wo.2017-sm}).

A consequence of Theorem \ref{theorem-daugavet-010} is established in the next result.
\begin{corollary}\label{theorem-a-aaa-a-a3}
Let $T\in \mathbb{B}(\mathcal{H})$. Then
\begin{align*}
\|T+TT^*T\|=\|T\|+\|T\|^3.
\end{align*}
Moreover, if ${\rm dist}(T,\mathbb{K}(\mathcal{H}))<\|T\|$
or ${\rm dist}(TT^*T,\mathbb{K}(\mathcal{H}))<\|TT^*T\|$, then
there exists a unit vector $x_o\in \mathcal{H}$ such
that $\frac{T}{\|T\|}x_o=\frac{TT^*T}{\|TT^*T\|}x_o$, $\|Tx_o\|=\|T\|$ and $\|TT^*Tx_o\|=\|TT^*T\|$.
\end{corollary}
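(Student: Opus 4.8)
The plan is to read off the norm identity directly from Theorem~\ref{theorem-daugavet-010}, and then to manufacture the vector $x_o$ by showing that the distance hypotheses force $\|T\|^2$ to be an eigenvalue of $T^*T$. For the first assertion, I would regard $\mathbb{B}(\mathcal{H})$ as a Hilbert $C^*$-module over itself via $\langle a,b\rangle:=a^*b$, so that $\langle T,T\rangle=T^*T$ and $T\langle T,T\rangle=TT^*T$. Applying Theorem~\ref{theorem-daugavet-010} with $x=T$ and $\alpha=\beta=1$ then gives $\|T+TT^*T\|=\|T\|+\|T\|^3$ immediately. Along the way I would record the auxiliary identity $\|TT^*T\|=\|T\|^3$, which follows from $(TT^*T)^*(TT^*T)=(T^*T)^3$, whence $\|TT^*T\|^2=\|(T^*T)^3\|=\|T^*T\|^3=\|T\|^6$.

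For the second part of the statement, the whole matter reduces to producing a unit vector $x_o$ with $T^*T x_o=\|T\|^2 x_o$. Indeed, once such an $x_o$ is available, direct computation finishes everything: $\|Tx_o\|^2=\langle T^*T x_o,x_o\rangle=\|T\|^2$ gives $\|Tx_o\|=\|T\|$; and $TT^*T x_o=\|T\|^2\,Tx_o$ yields simultaneously $\frac{T}{\|T\|}x_o=\frac{TT^*T}{\|TT^*T\|}x_o$ and $\|TT^*T x_o\|=\|T\|^2\|Tx_o\|=\|T\|^3=\|TT^*T\|$. The crux, and the step I expect to be the main obstacle, is therefore the spectral fact that each distance hypothesis makes $\|T\|^2$ an actual eigenvalue of $T^*T$, rather than merely the supremum of the spectrum.

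To handle this I would invoke that ${\rm dist}(T,\mathbb{K}(\mathcal{H}))$ equals the essential norm of $T$, so that the hypothesis ${\rm dist}(T,\mathbb{K}(\mathcal{H}))<\|T\|$ places the top of the essential spectrum of the positive operator $T^*T$ strictly below $\|T\|^2=\max\sigma(T^*T)$; since every point of $\sigma(T^*T)$ lying above the essential spectrum is an isolated eigenvalue of finite multiplicity, $\|T\|^2$ is such an eigenvalue and a unit eigenvector $x_o$ exists. Under the alternative hypothesis ${\rm dist}(TT^*T,\mathbb{K}(\mathcal{H}))<\|TT^*T\|$, the same reasoning applied to $S:=TT^*T$ (using $S^*S=(T^*T)^3$ and $\|S\|^2=\|T\|^6$) produces a unit vector $x_o$ with $(T^*T)^3 x_o=\|T\|^6 x_o$; I would then factor $(T^*T)^3-\|T\|^6 I=(T^*T-\|T\|^2 I)\big((T^*T)^2+\|T\|^2 T^*T+\|T\|^4 I\big)$ and observe that, since $T^*T\geq 0$, the second factor is bounded below by $\|T\|^4 I>0$ and hence invertible, which forces $T^*T x_o=\|T\|^2 x_o$. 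With $x_o$ secured in either case, the verification of the three displayed conclusions is the routine computation sketched above.
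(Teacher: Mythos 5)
Your proof is correct, and while the first assertion is obtained exactly as in the paper (apply Theorem~\ref{theorem-daugavet-010} with $x=T$, $\alpha=\beta=1$, together with $\|TT^*T\|=\|T\|^3$ via $(TT^*T)^*(TT^*T)=(T^*T)^3$), your treatment of the ``moreover'' part is genuinely different from the paper's. The paper gets the unit vector $x_o$ abstractly: it notes that $\mathbb{K}(\mathcal{H})$ is an $M$-ideal in $\mathbb{B}(\mathcal{H})$ and that $\mathcal{H}$ is strictly convex, and then cites the external result \cite[Theorem~4.4]{Wo.2017-sm} on generalized Daugavet equations, which under the distance hypotheses converts the equality $\|T+TT^*T\|=\|T\|+\|TT^*T\|$ into the existence of $x_o$. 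You instead argue spectrally and self-containedly: ${\rm dist}(T,\mathbb{K}(\mathcal{H}))$ is the essential norm, so the hypothesis forces $\max\sigma_{ess}(T^*T)=\|T\|_{ess}^2<\|T\|^2=\max\sigma(T^*T)$, and since spectral points of a self-adjoint operator outside the essential spectrum are isolated eigenvalues of finite multiplicity, $\|T\|^2$ is an eigenvalue of $T^*T$; under the alternative hypothesis you run the same argument for $S=TT^*T$ and then the commuting factorization $(T^*T)^3-\|T\|^6 I=\big((T^*T)^2+\|T\|^2T^*T+\|T\|^4I\big)\big(T^*T-\|T\|^2I\big)$, with the first factor bounded below by $\|T\|^4 I$ and hence invertible (note $T\neq0$ under either hypothesis), pulls the eigenvector back to $T^*T$. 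The concluding computations from $T^*Tx_o=\|T\|^2x_o$ are routine and correct. What each approach buys: the paper's proof is short on the page but rests on nontrivial $M$-ideal/Daugavet machinery valid in much greater Banach-space generality; yours uses only standard spectral theory of self-adjoint operators, is fully self-contained, and yields slightly more information, namely that $\|T\|^2$ is an isolated eigenvalue of $T^*T$ of finite multiplicity, so the set of admissible $x_o$ is a finite-dimensional eigenspace.
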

\begin{proof}
It follows from \eqref{daugavet-eq-in-mod} that $\|T+TT^*T\|=\|T\|+\|T\|^3$.
In addition, it is not difficult to check that $\|TT^*T\|=\|T\|^3$.
Therefore both $T$ and $TT^*T$ satisfy the generalized Daugavet equation
\begin{align}\label{a-plus-aaa--a-plus-aaa}
\|T+TT^*T\|=\|T\|+\|TT^*T\|.
\end{align}
Now, suppose that ${\rm dist}(T,\mathbb{K}(\mathcal{H}))<\|T\|$
or ${\rm dist}(TT^*T,\mathbb{K}(\mathcal{H}))<\|TT^*T\|$. Recall
that $\mathbb{K}(\mathcal{H})$ is an $M$-ideal in $\mathbb{B}(\mathcal{H})$ (see \cite{Wo.2017-indag}).
Moreover, $\mathcal{H}$ is strictly convex. Thus these information are legitimates to apply \cite[Theorem~4.4]{Wo.2017-sm}.
So, it follows from \cite[Theorem~4.4]{Wo.2017-sm} and \eqref{a-plus-aaa--a-plus-aaa} that there
exists a unit vector $x_o\in\mathcal{H}$ such that $\frac{T}{\|T\|}x_o=\frac{TT^*T}{\|TT^*T\|}x_o$,
$\|Tx_o\|=\|T\|$ and $\|TT^*Tx_o\|=\|TT^*T\|$.
\end{proof}
\section{An application: Orthogonalities in Hilbert $C^*$-modules}

In a normed linear space $\big(X,\|\!\cdot\!\|)$, for two vectors $x, y\in X$, one can
consider the \textit{Birkhoff--James orthogonality} (see \cite{B, J}) defined by
\begin{align*}
x\perp_B y\quad :\Leftrightarrow\quad \forall_{\lambda\in\mathbb{C}}\ \|x\|\leq \|x + \lambda y\|.
\end{align*}
We will consider also the \textit{r-Birkhoff--James orthogonality}, defined by
\begin{align*}
x\perp^r_B y\quad :\Leftrightarrow\quad \forall_{\alpha\in\mathbb{R}}\ \|x\|\leq \|x + \alpha y\|.
\end{align*}
Notice that $\perp_B \,\subseteq \,\perp^r_B$, but the converse is false in general.
For example, let us take $X = \mathbb{C}^2$ and let $x=(1,0)$, $y=(i,0)$. Then for all $\alpha\in\mathbb{R}$ we have
\begin{align*}
\|x + \alpha y\| = \|(1+\alpha i,0)\| = \sqrt{1 + \alpha^2} \geq 1 = \|x\|.
\end{align*}
Hence $x\perp^r_B y$. But $x\not\perp_B y$ since for $\lambda := i$ we have $\|x + \lambda y\| = 0 < 1 = \|x\|$.

We will use the following characterizations of both orthogonality relations. Namely, for arbitrary $x, y \in X$,
we have (see \cite{A.S.T, Ke.1}):
\begin{align}\label{I.151.P.2.2}
x\perp_B y \quad\Leftrightarrow \quad \displaystyle{\inf_{0\leq \theta < 2\pi}}\,\rho_{_{+}}(x, e^{i\theta}y) \geq 0
\end{align}
and
\begin{align}\label{I.150.P.2.2}
x\perp^r_B y \quad \Leftrightarrow\quad \rho_{_{-}}(x, y)\leq 0\leq\rho_{_{+}}(x, y).
\end{align}
When $X = \mathbb{M}_{n}(\mathbb{C})$ and $T, S \in X$,
a very tractable condition of the Birkhoff–-James orthogonality was found by Bhatia and \v{S}emrl in \cite{B.S}.
They showed that $T\perp_B S$ if and only if there exists a unit vector $x\in \mathbb{C}^n$ such that
\begin{align}\label{condition-b-s-for-orth}
\|Tx\| = \|T\| \quad \mbox{and} \quad [Tx, Sx] = 0.
\end{align}
Later Bhattacharyya and Grover \cite{B.G} showed that $T\perp^r_B S$
if and only if there exists a unit vector $x\in \mathbb{C}^n$ such that
\begin{align}\label{condition-b-g-for-orth}
\|Tx\| = \|T\| \quad \mbox{and} \quad \mbox{Re}\,[Tx, Sx] = 0.
\end{align}
To summarize, the papers \cite{B.S}, \cite{B.G} and
conditions \eqref{condition-b-s-for-orth}, \eqref{condition-b-g-for-orth} motivate the next theorem. In other words,
we now obtain a characterization of real version of the Birkhoff--James orthogonality in Hilbert $C^*$-modules
in terms of states of the underlying $C^*$-algebra.
\begin{theorem}\label{T.777.2}
Let $\mathscr{X}$ be a Hilbert $\mathscr{A}$-module, and $x, y \in\mathscr{X}$.
The following statements are equivalent:
\begin{itemize}
\item[(i)] $x\perp^r_B y$,
\item[(ii)] there exists $\varphi \in\Omega_x$ such that ${\rm Re}\,\varphi(\langle x, y\rangle) = 0$.
\end{itemize}
\end{theorem}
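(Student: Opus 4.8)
The plan is to deduce the equivalence directly from the two representation formulas for the norm derivatives obtained in Theorem~\ref{T.3.2} and Theorem~\ref{C.4.2}, combined with the known characterization \eqref{I.150.P.2.2} of the $r$-Birkhoff--James orthogonality. By \eqref{I.150.P.2.2}, the relation $x\perp^r_B y$ is equivalent to $\rho_{_{-}}(x,y)\le 0\le\rho_{_{+}}(x,y)$. Substituting the formulas $\rho_{_{+}}(x,y)=\max\{{\rm Re}\,\varphi(\langle x,y\rangle):\varphi\in\Omega_x\}$ and $\rho_{_{-}}(x,y)=\min\{{\rm Re}\,\varphi(\langle x,y\rangle):\varphi\in\Omega_x\}$, statement (i) becomes
\[
\min_{\varphi\in\Omega_x}{\rm Re}\,\varphi(\langle x,y\rangle)\le 0\le\max_{\varphi\in\Omega_x}{\rm Re}\,\varphi(\langle x,y\rangle).
\]
Thus the whole theorem reduces to showing that the value $0$ lies between this minimum and maximum \emph{if and only if} it is actually attained by some $\varphi\in\Omega_x$, which is exactly statement (ii).

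The implication from (ii) to (i) is immediate: if some $\varphi\in\Omega_x$ satisfies ${\rm Re}\,\varphi(\langle x,y\rangle)=0$, then the minimum over $\Omega_x$ is $\le 0$ and the maximum is $\ge 0$, so by the displayed inequalities $x\perp^r_B y$. For the reverse implication I would exploit the geometry of $\Omega_x$. The set $\Omega_x$ is convex, being the intersection of the convex set $\mathcal{S}(\mathscr{A})$ with the affine hyperplane $\{\varphi:\varphi(\langle x,x\rangle)=\|x\|^2\}$, and it is nonempty. Moreover the map $\varphi\mapsto{\rm Re}\,\varphi(\langle x,y\rangle)$ is real-affine. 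By Theorem~\ref{T.3.2} and Theorem~\ref{C.4.2} the maximum and the minimum are attained, say at $\varphi_{+},\varphi_{-}\in\Omega_x$. Joining them by the segment $t\varphi_{+}+(1-t)\varphi_{-}$ ($t\in[0,1]$), which stays in $\Omega_x$ by convexity, the affine function $t\mapsto{\rm Re}\,\big(t\varphi_{+}+(1-t)\varphi_{-}\big)(\langle x,y\rangle)$ is continuous and ranges from $\rho_{_{-}}(x,y)\le 0$ to $\rho_{_{+}}(x,y)\ge 0$. The intermediate value theorem then furnishes a $t_0\in[0,1]$ for which this value equals $0$; the corresponding $\varphi:=t_0\varphi_{+}+(1-t_0)\varphi_{-}$ belongs to $\Omega_x$ and satisfies ${\rm Re}\,\varphi(\langle x,y\rangle)=0$, giving (ii).

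The only point requiring genuine care --- and the step I expect to be the main, though modest, obstacle --- is the intermediate value argument: one must verify that $\Omega_x$ is convex and nonempty, and that the maximum and minimum are truly attained rather than merely approached, so that a single witnessing state $\varphi$ exists. All of this is supplied by the structure of $\Omega_x$ and by the fact that Theorems~\ref{T.3.2} and~\ref{C.4.2} are stated with ``$\max$''/``$\min$'' (attainment), which is precisely what lets us pass from ``$0$ lies in the range'' to ``$0$ is a value of the affine functional on $\Omega_x$''.
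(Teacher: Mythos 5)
Your proposal is correct and follows essentially the same route as the paper: for the main direction (i)$\Rightarrow$(ii), the paper likewise invokes \eqref{I.150.P.2.2}, takes states $\varphi_1,\varphi_2\in\Omega_x$ realizing $\rho_{_{-}}(x,y)\leq 0\leq\rho_{_{+}}(x,y)$ (attainment from Theorems~\ref{T.3.2} and~\ref{C.4.2}), and passes to a convex combination $\lambda_o\varphi_1+(1-\lambda_o)\varphi_2\in\Omega_x$ whose value under ${\rm Re}\,\varphi(\langle x,y\rangle)$ is zero --- exactly your intermediate-value argument on the segment. The only divergence is in the easy direction (ii)$\Rightarrow$(i), where the paper gives a direct Cauchy--Schwarz estimate showing $\|x\|\leq\|x+\alpha y\|$ for every $\alpha\in\mathbb{R}$, whereas you reuse \eqref{I.150.P.2.2} together with the max/min formulas; both are valid short arguments.
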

\begin{proof}
We assume that $x\neq 0$ (otherwise the result is trivial). First, suppose that $x\perp^r_B y$.
By \eqref{I.150.P.2.2} we have $\rho_{_{-}}(x, y)\leq 0\leq\rho_{_{+}}(x, y)$.
So, by Theorem \ref{T.3.2}, there exist $\fii_1,\fii_2\in\Omega_x$ such that
\begin{align*}
{\rm Re}\fii_1(\langle x,y\rangle)\leq 0 \leq {\rm Re}\fii_2(\langle x,y\rangle).
\end{align*}
It follows from the above inequality that for some $\lambda_o\in [0, 1]$ we have
\begin{align}\label{two-states-rho-o}
\lambda_o{\rm Re}\fii_1\big(\langle x,y\rangle\big)+(1-\lambda_o){\rm Re}\fii_2\big(\langle x,y\rangle\big)=0.
\end{align}
Since $S(\mathscr{A})$ is convex, $\lambda_o \fii_1+(1-\lambda_o)\fii_2\in S(\mathscr{A})$.
Put $\fii:=\lambda_o\fii_1+(1-\lambda_o)\fii_2$. We get then $\fii\in S(\mathscr{A})$ and $\fii(\langle x,x\rangle)=\|x\|^2$.
Hence $\varphi \in\Omega_x$. Also, by \eqref{two-states-rho-o}, ${\rm Re}\fii(\langle x,y\rangle)=0$.

Now we prove the implication (ii)$\Rightarrow$(i). Assume that there
exists $\varphi \in\Omega_x$ such that ${\rm Re}\fii(\langle x,y\rangle)=0$.
Fix arbitrarily $\alpha\in \mathbb{R}$. We have
\begin{align*}
\|x\|^2&=\fii\big(\langle x,x\rangle\big)={\rm Re}\fii\big(\langle x,x\rangle\big)+\alpha{\rm Re}\fii\big(\langle x,y\rangle\big)\\
&={\rm Re}\fii\big(\langle x,x+\alpha y\rangle\big)\leq\big|\fii\big(\langle x,x+\alpha y\rangle\big)\big|
\\&\leq\big\|\langle x,x+\alpha y\rangle\big\|\leq\|x\|\!\cdot\!\|x+\alpha y\|.
\end{align*}
Thus $\|x\|^2\leq\|x\|\!\cdot\!\|x+\alpha y\|$ and so $\|x\|\leq\|x+\alpha y\|$. Hence $x\perp^r_B y$.
\end{proof}
Finally, we are able to give a simple proof of \eqref{a-r-b-g--theo-bj-orth} using
the map $\rho_{_{+}}$, i.e. Theorem~\ref{T.3.2}.
Let us recall again: unlike \cite{A.R.1, B.G}, we did not apply a faithful
representation $\pi \colon \mathscr{A}\to \mathbb{B}(\mathcal{H})$ and linking algebra of $\mathscr{X}$.
Therefore the proofs of Theorem~\ref{T.3.2} and Theorem~\ref{T.877.2} are simpler
and shorter than the proofs in \cite{A.R.1, B.G}.
So, we now show that there is another (and easier) way to
get the celebrated result \eqref{a-r-b-g--theo-bj-orth}.
\begin{theorem}\label{T.877.2}
Let $\mathscr{X}$ be a Hilbert $\mathscr{A}$-module, and $x, y \in\mathscr{X}$.
The following statements are mutually equivalent:
\begin{itemize}
\item[(i)] $x\perp_B y$,
\item[(ii)] there exists $\varphi\in \Omega_x$ such that $\varphi(\langle x, y\rangle) = 0$,
\item[(iii)] $\|x + \lambda y\|^2\geq \|x\|^2 + |\lambda|^2 m(y)$ for all $\lambda\in\mathbb{C}$,
\end{itemize}
where $m(y): = \inf\big\{\varphi(\langle y, y\rangle): \, \varphi\in \mathcal{S}(\mathscr{A})\big\}$.
\end{theorem}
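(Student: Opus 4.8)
The strategy is to establish the cycle of implications $(i)\Rightarrow(ii)\Rightarrow(iii)\Rightarrow(i)$, leaning on Theorem~\ref{T.3.2} to convert the norm-derivative criterion \eqref{I.151.P.2.2} into a statement about states. For the first implication, suppose $x\perp_B y$. By \eqref{I.151.P.2.2} this means $\rho_{_{+}}(x,e^{i\theta}y)\geq 0$ for every $\theta\in[0,2\pi)$. Invoking Theorem~\ref{T.3.2}, each such inequality produces, for each $\theta$, a state $\fii_\theta\in\Omega_x$ with ${\rm Re}\,\fii_\theta(\langle x,e^{i\theta}y\rangle)\geq 0$. The goal is to manufacture a single $\fii\in\Omega_x$ that annihilates $\langle x,y\rangle$ entirely (not merely its real part). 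The natural route is to run the same compactness/convexity argument used in Theorem~\ref{T.777.2}: the set $\Omega_x$ is convex and weak* compact (it is the intersection of the weak* compact $\mathcal{S}(\mathscr{A})$ with the weak* closed affine slice $\{\fii:\fii(\langle x,x\rangle)=\|x\|^2\}$), and the map $\fii\mapsto\fii(\langle x,y\rangle)$ is weak* continuous and affine. I would show that $0$ lies in the image $K:=\{\fii(\langle x,y\rangle):\fii\in\Omega_x\}\subseteq\mathbb{C}$, which is a compact convex subset of $\mathbb{C}$. The condition ${\rm Re}\,\fii_\theta(e^{-i\theta}\langle x,y\rangle)\geq 0$ for all $\theta$ says that for every direction $e^{i\theta}$ the set $K$ meets the closed half-plane $\{z:{\rm Re}(e^{-i\theta}z)\geq 0\}$; a standard separation/supporting-line argument on the convex compact $K$ then forces $0\in K$.

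\textbf{The main obstacle.} The delicate point is precisely the step just described: passing from ``for each direction some state satisfies a real-part inequality'' to ``one state kills the whole complex number.'' The cleanest way to close this is a separation argument. If $0\notin K$, then since $K$ is compact and convex one can strictly separate $0$ from $K$ by a real-linear functional on $\mathbb{C}\cong\mathbb{R}^2$, i.e. there is a direction $e^{i\theta_0}$ and $\delta>0$ with ${\rm Re}(e^{-i\theta_0}z)\geq\delta>0$ or $\leq-\delta<0$ uniformly for $z\in K$. Choosing the offending $\theta$ (or its antipode) in \eqref{I.151.P.2.2} then contradicts $\rho_{_{+}}(x,e^{i\theta}y)=\max_{\fii\in\Omega_x}{\rm Re}\,\fii(\langle x,e^{i\theta}y\rangle)\geq 0$. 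Hence $0\in K$, giving the desired $\fii\in\Omega_x$ with $\fii(\langle x,y\rangle)=0$, which is (ii). I expect this separation bookkeeping to be the one genuinely nontrivial maneuver; everything else is routine.

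\textbf{The remaining implications.} For $(ii)\Rightarrow(iii)$, fix the state $\fii\in\Omega_x$ with $\fii(\langle x,y\rangle)=0$ and estimate directly. For any $\lambda\in\mathbb{C}$,
\begin{align*}
\|x+\lambda y\|^2 &= \big\|\langle x+\lambda y,x+\lambda y\rangle\big\|\geq \fii\big(\langle x+\lambda y,x+\lambda y\rangle\big)\\
&=\fii(\langle x,x\rangle)+2{\rm Re}\big(\overline{\lambda}\,\fii(\langle x,y\rangle)\big)+|\lambda|^2\fii(\langle y,y\rangle).
\end{align*}
The cross term vanishes because $\fii(\langle x,y\rangle)=0$, and $\fii(\langle x,x\rangle)=\|x\|^2$ since $\fii\in\Omega_x$, so the right side is $\|x\|^2+|\lambda|^2\fii(\langle y,y\rangle)\geq\|x\|^2+|\lambda|^2 m(y)$ by the definition of $m(y)$ as an infimum over \emph{all} states. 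This is exactly (iii). Finally $(iii)\Rightarrow(i)$ is immediate: since $\langle y,y\rangle\geq 0$, we have $m(y)\geq 0$, whence $\|x+\lambda y\|^2\geq\|x\|^2$ for every $\lambda\in\mathbb{C}$, which is the definition of $x\perp_B y$.
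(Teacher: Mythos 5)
Your proposal follows the same route as the paper's proof: the cycle (i)$\Rightarrow$(ii)$\Rightarrow$(iii)$\Rightarrow$(i), with (i)$\Rightarrow$(ii) obtained from \eqref{I.151.P.2.2} and Theorem~\ref{T.3.2} via a convexity/separation argument on the image set $K=\big\{\varphi(\langle x,y\rangle):\varphi\in\Omega_x\big\}$, and with (ii)$\Rightarrow$(iii) and (iii)$\Rightarrow$(i) proved by exactly the computations you give. In fact your treatment of the key step is \emph{more} careful than the paper's: the paper only observes that $K$ is convex and then asserts that meeting every closed half-plane through the origin forces $0\in K$. That inference is false for general bounded convex sets (take the open upper half-disk together with the open radius $(0,1)$ of the real axis: it is convex, meets every closed half-plane through $0$, yet omits $0$); convexity alone yields only $0\in{\rm cl}(K)$, and your compactness of $K$ is precisely what closes this hole.

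The one point you should repair is the justification of that compactness. You call $\mathcal{S}(\mathscr{A})$ weak*-compact, but this holds only when $\mathscr{A}$ is unital; for non-unital $\mathscr{A}$ (e.g.\ $\mathscr{A}=\mathbb{K}(\mathcal{H})$) the weak* closure of the state space contains positive functionals of norm strictly less than one, and the theorem is stated for an arbitrary $C^*$-algebra. The standard fix is short: for $x\neq 0$ (the case $x=0$ being trivial), write $\Omega_x=Q(\mathscr{A})\cap\big\{\varphi:\varphi(\langle x,x\rangle)=\|x\|^2\big\}$, where $Q(\mathscr{A})$ denotes the quasi-state space (positive functionals of norm at most one), which \emph{is} weak*-compact by Alaoglu and weak*-closedness of positivity; any $\varphi\in Q(\mathscr{A})$ with $\varphi(\langle x,x\rangle)=\|x\|^2$ satisfies $\|\varphi\|\geq\varphi(\langle x,x\rangle)/\|\langle x,x\rangle\|=1$ and hence is a state, so this intersection really is $\Omega_x$. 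With this substitution (or, alternatively, by passing to the unitization and restricting states back to $\mathscr{A}$), your argument is complete and, at the delicate step, tighter than the published one.
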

\begin{proof}
(i)$\Rightarrow$(ii) Let $x\perp_B y$. Then, by Theorem \ref{T.3.2} and \eqref{I.151.P.2.2}, we get
\begin{align}\label{I.1.T.877.2}
\displaystyle{\inf_{0\leq \theta < 2\pi}}\,\max\Big\{\mbox{Re}\,e^{i\theta}\varphi(\langle x, y\rangle): \, \varphi \in \Omega_x\Big\} \geq 0.
\end{align}
It is easy to see that the set $E:=\big\{\varphi(\langle x, y\rangle): \, \varphi \in \Omega_x\big\}$
is convex and hence its closure is a closed convex set. Therefore, by \eqref{I.1.T.877.2},
the set $E$ has such a position in the complex plane that it must contain
at least one value with positive real part, under all rotations around the origin. Thus
$E$ must contain zero, and so there is a $\varphi\in \Omega_x$ such that $\varphi(\langle x, y\rangle) = 0$.

(ii)$\Rightarrow$(iii) Suppose (ii) holds. Then, for every $\lambda \in \mathbb{C}$, we have
\begin{align*}
\|x + \lambda y\|^2 &\geq \varphi\big(\langle x + \lambda y, x + \lambda y\rangle\big)
\\& = \varphi(\langle x, x\rangle) + 2\mbox{Re}\,\big(\lambda \varphi(\langle x, y\rangle)\big) + |\lambda|^2\varphi(\langle y, y\rangle)
\\& = \|x\|^2 + |\lambda|^2\varphi(\langle y, y\rangle).
\end{align*}
Therefore
\begin{align*}
\|x + \lambda y\|^2 \geq \|x\|^2 + |\lambda|^2\varphi(\langle y, y\rangle),
\end{align*}
which yields $\|x + \lambda y\|^2\geq \|x\|^2 + |\lambda|^2 m(y)$ for all $\lambda\in\mathbb{C}$.

(iii)$\Rightarrow$(i) The implication is trivial.
\end{proof}
As a natural generalization of the notion of Birkhoff--James orthogonality,
the concept of strong Birkhoff--James orthogonality,
which involves modular structure of a Hilbert $C^*$-module was introduced in \cite{A.R.2}.
When $x$ and $y$ are elements of a Hilbert $\mathscr{A}$-module $\mathscr{X}$,
we consider the \textit{strong Birkhoff--James orthogonality}:
\begin{align*}
x\perp^s_B y\quad :\Leftrightarrow\quad \forall_{a\in \mathscr{A}}\ \|x\|\leq \|x +ya\|.
\end{align*}
One can easily observe that
$x\perp y\, \Longrightarrow \, x\perp^s_B y \, \Longrightarrow \, x\perp^r_B y$,
while the converses do not hold in general (see \cite{A.R.2}).

In the next result we establish characterizations of the strong Birkhoff--James orthogonality
for elements of a Hilbert $C^*$-module based on norm derivatives. We will apply our
new tools - Theorems~\ref{T.3.2}, \ref{C.4.2} and \ref{T.777.2}.
\begin{theorem}\label{T.6.2}
Let $\mathscr{X}$ be a Hilbert $\mathscr{A}$-module, and $x, y \in\mathscr{X}$.
The following statements are mutually equivalent:
\begin{itemize}
\item[(i)] $x\perp^s_B y$,
\item[(ii)] $\rho_{_{-}}(x, ya)\leq 0$ for all $a\in \mathscr{A}$,
\item[(iii)] $\rho_{_{+}}(x, ya)\geq 0$ for all $a\in \mathscr{A}$.
\end{itemize}
\end{theorem}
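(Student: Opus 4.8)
The plan is to reduce strong Birkhoff--James orthogonality to the $r$-Birkhoff--James orthogonality already handled in \eqref{I.150.P.2.2}, and then to exploit the symmetry relation (P2) in order to see that the two one-sided conditions (ii) and (iii) are in fact equivalent to one another. First I would observe that $x\perp^s_B y$ is the same as demanding $x\perp^r_B(ya)$ for every $a\in\mathscr{A}$. Indeed, for a fixed $a$ the scalars $\alpha a$ with $\alpha\in\mathbb{R}$ still lie in $\mathscr{A}$, so $\|x\|\leq\|x+\alpha(ya)\|=\|x+y(\alpha a)\|$; conversely the single choice $\alpha=1$ already recovers $\|x\|\leq\|x+ya\|$. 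The point making this collapse work is the trivial identity $\{\alpha a:\alpha\in\mathbb{R},\,a\in\mathscr{A}\}=\mathscr{A}$, which lets the real-scalar quantifier of $\perp^r_B$ be absorbed into the module action.

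Next I would feed this into the characterization \eqref{I.150.P.2.2} applied to the element $ya$: the condition $x\perp^r_B(ya)$ is precisely the two-sided inequality $\rho_{_{-}}(x,ya)\leq0\leq\rho_{_{+}}(x,ya)$. Combined with the reduction of the previous paragraph, this immediately delivers the implication (i)$\Rightarrow$[(ii) and (iii)]: assuming $x\perp^s_B y$, for each $a$ we get both $\rho_{_{-}}(x,ya)\leq0$ and $\rho_{_{+}}(x,ya)\geq0$.

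The key step is to recognize that (ii) and (iii) are equivalent on their own. Here I would invoke property (P2), which gives $\rho_{_{+}}(x,-ya)=-\rho_{_{-}}(x,ya)$, that is, $\rho_{_{+}}(x,y(-a))=-\rho_{_{-}}(x,ya)$. Since $-a$ runs over all of $\mathscr{A}$ as $a$ does, the requirement $\rho_{_{-}}(x,ya)\leq0$ for every $a$ is literally the same as the requirement $\rho_{_{+}}(x,yb)\geq0$ for every $b$. Thus (ii)$\Leftrightarrow$(iii), and in particular each single-sided hypothesis already forces the conjunction of both. With this the circle closes: assuming (ii), the equivalence (ii)$\Leftrightarrow$(iii) supplies (iii) as well, hence $\rho_{_{-}}(x,ya)\leq0\leq\rho_{_{+}}(x,ya)$ for every $a$, so \eqref{I.150.P.2.2} yields $x\perp^r_B(ya)$, and evaluating at $\alpha=1$ gives $\|x\|\leq\|x+ya\|$; as $a$ is arbitrary this is (i). The symmetric argument handles (iii)$\Rightarrow$(i).

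I do not expect a genuine obstacle in this argument, since once the reduction to $\perp^r_B(ya)$ is in place everything is a formal consequence of \eqref{I.150.P.2.2} and the algebraic property (P2) of the norm derivatives; if one prefers, the max/min representations of Theorems~\ref{T.3.2} and \ref{C.4.2} (equivalently the state characterization of Theorem~\ref{T.777.2}) can be substituted for \eqref{I.150.P.2.2} without changing the structure. The only place asking for a moment of care is the quantifier manipulation identifying $x\perp^s_B y$ with the family $\{x\perp^r_B(ya):a\in\mathscr{A}\}$, and the (P2)-driven flip that turns the one-sided $\rho_{_{-}}$-inequality into the one-sided $\rho_{_{+}}$-inequality.
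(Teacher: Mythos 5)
Your proof is correct, and it takes a genuinely different route from the paper's. Your key move is the quantifier absorption: $x\perp^s_B y$ holds if and only if $x\perp^r_B(ya)$ for every $a\in\mathscr{A}$, which is valid because $\alpha(ya)=y(\alpha a)$ for $\alpha\in\mathbb{R}$ (forward direction) and because $\alpha=1$ recovers the defining inequality (backward direction). After that, everything follows formally from the general normed-space characterization \eqref{I.150.P.2.2} applied to the pair $(x,ya)$, together with the (P2) flip $\rho_{_{+}}(x,y(-a))=-\rho_{_{-}}(x,ya)$ and the bijection $a\mapsto -a$; your logic (i)$\Leftrightarrow$[(ii)$\wedge$(iii)] plus (ii)$\Leftrightarrow$(iii) closes the circle soundly. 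The paper instead routes both nontrivial implications through its state-theoretic machinery: for (i)$\Rightarrow$(ii) it applies Theorem~\ref{T.777.2} to the single element $y\langle y,x\rangle$ (itself an instance of your absorption trick with $a=\langle y,x\rangle$), deduces $\varphi_o\big(\langle x,y\rangle\langle y,x\rangle\big)=0$ for some $\varphi_o\in\Omega_x$ by positivity, and then uses the Cauchy--Schwarz inequality \eqref{inequality-positive-functional} to conclude ${\rm Re}\,\varphi_o\big(\langle x,ya\rangle\big)=0$ simultaneously for \emph{every} $a\in\mathscr{A}$, after which Theorem~\ref{C.4.2} yields (ii); for (iii)$\Rightarrow$(i) it uses Theorem~\ref{T.3.2} to produce, for each $a$, a state $\varphi_a\in\Omega_x$ with ${\rm Re}\,\varphi_a\big(\langle x,ya\rangle\big)\geq 0$ and runs a direct norm estimate, while the (ii)$\Rightarrow$(iii) step is the same (P2) argument in both. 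What each approach buys: yours is shorter, avoids Theorems~\ref{T.3.2}, \ref{C.4.2} and \ref{T.777.2} entirely, and reveals the equivalence as a purely formal consequence of \eqref{I.150.P.2.2} and the module axioms; the paper's, at the price of more work, extracts a single state $\varphi_o\in\Omega_x$ with $\varphi_o\big(\langle x,y\rangle\langle y,x\rangle\big)=0$ witnessing the orthogonality uniformly in $a$ --- a strictly stronger by-product in the spirit of the Aramba\v{s}i\'{c}--Raji\'{c} characterization of $\perp^s_B$ --- and demonstrates the intended application of the paper's main theorems, which is the stated purpose of that section.
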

\begin{proof}
(i)$\Rightarrow$(ii) Let $x\perp^s_B y$. Then $x\perp^r_B y\langle y, x\rangle$. So, by Theorem \ref{T.777.2},
there exists $\varphi_o \in \Omega_x$ such
that ${\rm Re}\varphi_o\big(\big\langle x, y\langle y, x\rangle\big\rangle\big) = 0$.
Hence ${\rm Re}\varphi_o\big(\langle x, y\rangle\langle y, x\rangle\big) = 0$.
Thus $\varphi_o\big(\langle x, y\rangle\langle y, x\rangle\big) = 0$,
since $\varphi_o\big(\langle x, y\rangle\langle y, x\rangle\big)\in\mathbb{R}$.
Therefore,
\begin{align*}
\left|\mbox{Re}\,\varphi_o\big(\langle x, ya\rangle\big)\right|^2 \leq \left|\varphi_o\big(\langle x, y\rangle a\big)\right|^2 \stackrel{\eqref{inequality-positive-functional}}{\leq} \varphi_o\big(\langle x, y\rangle\langle y, x\rangle\big)\varphi_o(a^*a) = 0
\end{align*}
for each $a\in \mathscr{A}$.
This implies $\mbox{Re}\,\varphi_o\big(\langle x, ya\rangle\big) = 0$. Hence
\begin{align*}
\min\Big\{\mbox{Re}\,\varphi\big(\langle x, ya\rangle\big): \, \varphi \in \Omega_x\Big\} \leq 0
\end{align*}
and by Theorem \ref{C.4.2} it follows that $\rho_{_{-}}(x, ya)\leq 0$ for all $a\in \mathscr{A}$.

(ii)$\Rightarrow$(iii) By the condition (ii), $\rho_{_{-}}\big(x, y(-a)\big)\leq 0$ for all $a\in\mathscr{A}$.
Hence, by (P2), $\rho_{_{+}}(x, ya) = - \rho_{_{-}}\big(x, y(-a)\big) \geq 0$ for all $a\in\mathscr{A}$.

(iii)$\Rightarrow$(i) Suppose (iii) holds.
We may assume that $x\neq0$ otherwise (i) trivially holds. So, fix $a\in\mathscr{A}$.
By Theorem \ref{T.3.2}, there exists a state $\varphi_a$ in $\Omega_x$ such that
$\mbox{Re}\,\varphi_a\big(\langle x, ya\rangle\big) \geq 0$. Then we have
\begin{align*}
0 &\leq \mbox{Re}\,\varphi_a\big(\langle x, ya\rangle\big)
= \mbox{Re}\,\varphi_a\big(\langle x, x + ya\rangle\big) - \mbox{Re}\,\varphi_a\big(\langle x, x\rangle\big)
\\&\leq \big|\varphi_a\big(\langle x, x + ya\rangle\big)\big| - \|x\|^2 \leq \big\|\langle x, x + ya\rangle\big\| - \|x\|^2
\\&\leq \|x\|\cdot\|x + ya\| - \|x\|^2 = \|x\|\cdot(\|x + ya\| - \|x\|).
\end{align*}
This implies that $\|x\|\leq\|x + ya\|$. Hence $x\perp^s_B y$.
\end{proof}
We recall that (see \cite{Dra, Mil}) two elements $x$ and $y$ of a normed linear space $X$ are $\rho$-orthogonal
if $\rho(x, y) : = \frac{\rho_{_{+}}(x, y) + \rho_{_{-}}(x, y)}{2} = 0$, and in this case we write $x\perp_{\rho} y$.
It is worth mentioning that the notion of $\rho$-orthogonality may be a strong
tool. Indeed, the open problem posed in \cite{A.S.T} was solved in in the paper \cite{Wo.2019-laa} and
the concept of $\rho$-orthogonality played a significant role.
For facts about the $\rho$-orthogonality in normed linear spaces, we refer
the reader to \cite{C.W.1, C.W.2, M.Z.D, Z.M}.

If $x$ and $y$ are elements of a Hilbert $\mathscr{A}$-module $\mathscr{X}$, then we have
\begin{align}\label{I.1.R.1}
x\perp y\, \Longrightarrow \, x\perp_{\rho} y \, \Longrightarrow \, x\perp^r_B y.
\end{align}
Indeed, if $x\perp y$, then $\langle x, y\rangle = 0$. Thus, for
every $\varphi \in \Omega_x$, we have $\mbox{Re}\,\varphi(\langle x, y\rangle) = 0$
and so by Theorem~\ref{T.3.2} and Theorem~\ref{C.4.2}
we obtain $\rho_{_{-}}(x, y) = \rho_{_{+}}(x, y) =0$. Hence $\rho(x, y) = 0$, and thus $x\perp_{\rho} y$.

Further, if $x\perp_{\rho} y$, then $\rho_{_{-}}(x, y) + \rho_{_{+}}(x, y) =0$ and
therefore $\rho_{_{-}}(x, y)\leq 0\leq\rho_{_{+}}(x, y)$ by (P1).
It follows \eqref{I.150.P.2.2} from that $x\perp^r_B y$.

As an immediate consequence of Theorem~\ref{T.3.2} and Theorem \ref{C.4.2} we obtain a characterization of
the $\rho$-orthogonality in Hilbert $C^*$-modules as follows.
\begin{theorem}\label{T.60.2}
Let $\mathscr{X}$ be a Hilbert $\mathscr{A}$-module, and $x, y \in\mathscr{X}$.
The following statements are equivalent:
\begin{itemize}
\item[(i)] $x\perp_{\rho} y$,
\item[(ii)] $\max\big\{{\rm Re}\,\varphi(\langle x, y\rangle): \, \varphi \in \Omega_x\big\}
= \max\big\{-{\rm Re}\,\varphi(\langle x, y\rangle): \, \varphi \in \Omega_x \big\}$.
\end{itemize}
\end{theorem}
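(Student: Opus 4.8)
The plan is to observe that this theorem is a direct corollary of the two formulas already established, so the entire proof reduces to unwinding the definition of $\rho$-orthogonality and applying a single elementary identity relating minima and maxima. First I would recall that, by definition, $x\perp_{\rho} y$ means $\rho(x,y) = \frac{\rho_{_{+}}(x,y)+\rho_{_{-}}(x,y)}{2}=0$, which is plainly equivalent to the equality $\rho_{_{+}}(x,y) = -\rho_{_{-}}(x,y)$.

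Next I would substitute the two representation formulas. By Theorem~\ref{T.3.2} we have $\rho_{_{+}}(x,y) = \max\{{\rm Re}\,\varphi(\langle x,y\rangle):\varphi\in\Omega_x\}$, and by Theorem~\ref{C.4.2} we have $\rho_{_{-}}(x,y) = \min\{{\rm Re}\,\varphi(\langle x,y\rangle):\varphi\in\Omega_x\}$. The one small manipulation needed is the general set-theoretic identity $\min_{\varphi}g(\varphi) = -\max_{\varphi}(-g(\varphi))$, applied to $g(\varphi):={\rm Re}\,\varphi(\langle x,y\rangle)$ over $\varphi\in\Omega_x$. This gives
\begin{align*}
-\rho_{_{-}}(x,y) = -\min\big\{{\rm Re}\,\varphi(\langle x,y\rangle):\varphi\in\Omega_x\big\} = \max\big\{-{\rm Re}\,\varphi(\langle x,y\rangle):\varphi\in\Omega_x\big\}.
\end{align*}

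Combining these, the equality $\rho_{_{+}}(x,y) = -\rho_{_{-}}(x,y)$ becomes exactly the asserted identity between the two maxima in (ii), so (i)$\Leftrightarrow$(ii) follows at once. I would note that, unlike the earlier results in the paper, there is no genuine obstacle here: all the analytic work was done in Theorem~\ref{T.3.2} (whose proof relied on the smoothness/density argument via Lemma~\ref{lem-star-shaped-subset-sm} and a weak$^*$-compactness extraction) and in its mirror Theorem~\ref{C.4.2}. The present statement is purely a reformulation, and the only point worth stating carefully is that both extrema are attained (as maxima, not merely suprema), which is precisely what Theorems~\ref{T.3.2} and~\ref{C.4.2} guarantee; this is what licenses writing ``max'' on both sides of (ii) rather than ``sup'' and ``inf''.
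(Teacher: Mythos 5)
Your proof is correct and is exactly the argument the paper intends: the paper presents Theorem~\ref{T.60.2} without a written proof, calling it an immediate consequence of Theorems~\ref{T.3.2} and~\ref{C.4.2}, and your unwinding of $x\perp_{\rho}y$ into $\rho_{_{+}}(x,y)=-\rho_{_{-}}(x,y)$ followed by the identity $-\min_{\varphi}g(\varphi)=\max_{\varphi}(-g(\varphi))$ is precisely that deduction. Your closing remark that the attainment of the extrema (``max'' rather than ``sup'') is guaranteed by those two theorems is a sensible point of care, fully consistent with the paper.
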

\begin{remark}
Notice that the converses in \eqref{I.1.R.1} do not hold in general.
For example, consider $\mathbb{M}_{2}(\mathbb{C})$ as a Hilbert $\mathbb{M}_{2}(\mathbb{C})$-module
and let $T = \begin{bmatrix} 1 & 0 \\ 0 & 1 \end{bmatrix}$,
$S = \begin{bmatrix} -1 & 0 \\ 0 & 1 \end{bmatrix}$ and $R = \begin{bmatrix} -1 & 0 \\ 0 & 0 \end{bmatrix}$.
Then simple computations show that
\begin{align*}
\rho_{_{+}}(T, S) = -\rho_{_{-}}(T, S) = - \rho_{_{-}}(T, R) = 1 \quad \mbox{and} \quad \rho_{_{+}}(T, R) = 0.
\end{align*}
Hence $T\perp_{\rho} S$. But $T\not\perp S$, since $\langle T, S\rangle = S \neq 0$.
Also, $T\not\perp_{\rho} R$ but
\begin{align*}
\|T + \alpha R\| = \left\|\begin{bmatrix}
1- \alpha & 0
\\ 0 & 1
\end{bmatrix}\right\| = \max\{|1 - \alpha|, 1\} \geq 1 = \|T\|
\end{align*}
for all $\alpha\in\mathbb{R}$. Therefore $T\perp^r_B R$.

To end the work we show that the orthogonalities $\perp_{\rho}$ and $\perp^s_B$ are incomparable.
Indeed, since for $C = \begin{bmatrix}
1 & 0
\\ 0 & -1
\end{bmatrix}$ we have
$\|T + SC\| = 0 < 1 = \|T\|$, we get
$T\not\perp^s_B S$. Furthermore, for every $A = \begin{bmatrix}
a & b
\\ c & d
\end{bmatrix}\in \mathbb{M}_{2}(\mathbb{C})$ we have
$\|T + RA\| = \left\|\begin{bmatrix}
1- a & -b
\\ 0 & 1
\end{bmatrix}\right\| \geq 1 = \|T\|$,
whence $T\perp^s_B R$. Therefore $\perp_{\rho}\nsubseteq\perp^s_B$ and $\perp_{\rho}\nsupseteq\perp^s_B$.
\end{remark}
\bibliographystyle{amsplain}

\end{document}